\definecolor{dkgreen}{rgb}{0,0.6,0}
\definecolor{gray}{rgb}{0.5,0.5,0.5}
\definecolor{mauve}{rgb}{0.58,0,0.82}
\tiny\color{gray},
\newcommand{\Ecal}{\mathcal{E}}
\newcommand{\Xcal}{\mathcal{X}}
\newcommand{\Pcal}{\mathcal{P}}
\newcommand{\Lcal}{\mathcal{L}}
\newcommand{\Wcal}{\mathcal{W}}
\renewcommand{\d}{\ensuremath{\mathrm{d}}}
\newcommand{\dt}{ \ensuremath{\mathrm{d} t } }
\newcommand{\dx}{ \ensuremath{\mathrm{d} x} }
\newcommand{\p}{ \mathbb{P}}
\newcommand{\e}{ \operatorname{\mathbb E}}
\newcommand{\Real}{\mathbb{R}}
\newcommand{\ZZ}{\mathbb{Z}}
\newcommand{\abs}[1]{\left\vert#1\right\vert}
\newcommand{\norm}[1]{\left\|#1\right\|}
\newcommand{\floor}[1]{\left \lfloor #1 \right \rfloor }
\newcommand{\transpose}{\textsf{T}} 
\newtheorem{re}{Remark}[section]
\newtheorem{lemma}{Lemma}[section]
\newtheorem{theo}{Theorem}[section]
\newtheorem{coro}{Corollary}[section]
\newtheorem{assum}{Assumption}[section]
\newtheorem{prop}{Proposition}[section]
\numberwithin{equation}{section}
\title{Empirical approximation to invariant measures of
	non-degenerate McKean--Vlasov dynamics}
\author{Wenjing Cao\thanks{School of Mathematical Sciences, 
		Fudan University, 
		Shanghai 200433, China (email: {\tt wjcao22@m.fudan.edu.cn}). The author's research is supported
        by the National Key R{\&}D Program of China (No.~2022ZD0116401)} 
        \and 
        Kai Du\thanks{SCMS, Fudan University, Shanghai, China;
    Shanghai Artificial Intelligence Laboratory
    (email: {\tt kdu@fudan.edu.cn}). 
    The author's research is supported
    by the National Natural Science Foundation of China (No.~12222103),
    and by the National Key R{\&}D Program of China (No.~2022ZD0116401)
	}}
\date{\today}
\begin{document}
	\maketitle
	\begin{abstract}
		This paper studies the approximation of invariant measures of McKean--Vlasov dynamics with non-degenerate additive noise. While prior findings necessitated a strong monotonicity condition on the McKean--Vlasov process, we expand these results to encompass dissipative and weak interaction scenarios. Utilizing a reflection coupling technique, we prove that the empirical measures of the McKean--Vlasov process and its path-dependent counterpart can converge to the invariant measure in the Wasserstein metric. The Curie--Weiss mean-field lattice model serves as a numerical example to illustrate empirical approximation. 
	\end{abstract}
	\noindent{\textit{Keywords:}} McKean--Vlasov processes; empirical measures; invariant measures;  Wasserstein distance; reflection coupling

    \section{Introduction}\label{Introduction}

    This paper is concerned with the approximation of the invariant measure of the following McKean--Vlasov process with non-degenerate additive noise:
	\begin{equation}\label{sde1}
		\d X_t=b\big(X_t, \Lcal(X_t)\big)\dt +\d B_t
	\end{equation}
    where $B$ is a standard Brownian motion in $\Real^d$, and $\Lcal(X_t)$ denotes the distribution law of $X_t$. 
    The McKean--Vlasov dynamics arise from the ``propagation of chaos'' of particle systems with mean-field interaction (cf. \cite{kac1956foundations, mckean1967propagation, mckean1966class}), having wide application in a variety of fields such as physics, economics and computer sciences (cf. \cite{lasry2007mean, 10.1214/19-AAP1499, sznitman1991topics}). A probability measure $\mu^*$ is called invariant for \eqref{sde1} if $P^*_t\mu^*=\mu^*$ for all $t\geq0$, where $(P_t^*)_{t\geq0}$ denotes the nonlinear transition semigroup such that $P_t^*\Lcal(X_0):=\Lcal(X_t)$. 
    
    This work serves as a seamless continuation of the recent paper \cite{du2023empirical}, where the authors introduced an innovative approach for approximating the invariant measure of the general McKean--Vlasov process governed by
	\begin{equation}\label{sde_origin}
		\d X_t=b\big(X_t, \Lcal(X_t)\big)\dt +\sigma\big(X_t, \Lcal(X_t)\big)\d B_t.
	\end{equation}
    The key idea is to introduce a path-dependent stochastic differential equation (SDE)
    \[
    	\d Z_t=b\Big(Z_t, \frac{1}{t}\int_{0}^{t}\delta_{Z_s} \d s\Big)\dt +\sigma\Big(Z_t, \frac{1}{t}\int_{0}^{t}\delta_{Z_s} \d s\Big)\d B_t,
    \]
    and argue that the empirical measure of $Z$, namely, $\mathcal{E}_t(Z):= \frac{1}{t}\int_{0}^{t}\delta_{Z_s} \d s$,
    can converge to the invariant measure of the McKean--Vlasov process $X$ in \eqref{sde_origin}
    under specific conditions.
    In \cite{du2023empirical} the authors proved this convergence by employing a strong monotonicity condition: there are
    constants $\alpha > \beta \ge 0$ such that
    \[
        2\langle b(x,\mu)-b(y,\nu),x-y \rangle + \|\sigma(x,\mu)  -\sigma(y,\nu)\|^{2}
	\leq -\alpha\abs{x-y}^{2}+\beta \mathcal{W}_{2}(\mu,\nu)^{2}
    \]
    for all \(x,\,y\in\Real^{d}\) and \(\mu,\,\nu\in \mathcal{P}_2(\Real^d)\) (refer to notations at the end of this section).
    However, such a condition rules out many useful models from applications; 
    for instance, the Curie--Weiss mean-field model
    \begin{equation}\label{Curie0}
        \d X_t= \left[-\beta(X_t^3-X_t)+\beta K\e[X_t]\right]\d t+\d B_t
    \end{equation}
    does not satisfy the strong monotonicity if $\beta>0$.
    On the other hand, it is known that the McKean--Vlasov process governed by \eqref{sde1} has a unique invariant measure under certain dissipativity conditions along with the non-degeneracy nature of the equation (see Assumption \ref{dissi}; cf. \cite[Remarks 2.8 and 5.1]{main}).
    Hence, it naturally raises the question of whether the findings presented in \cite{du2023empirical} can be extended to encompass the dissipative case.
    
    Building upon the idea in \cite{du2023empirical}, we study the empirical approximation to the invariant measure for the McKean--Vlasov SDE \eqref{sde1} by imposing dissipativity and weak interaction conditions. The convergence is characterized by upper bound estimates for the $1$-Wasserstein distance $\Wcal_1$:
    \begin{equation}\label{simple}    \e\big[\Wcal_1(\Ecal_t(X),\mu^*)\big],\, \e\big[\Wcal_1(\Ecal_t(Z),\mu^*)\big]=O(t^{-\varepsilon})
    \end{equation}
    for $\varepsilon>0$, 
    where $\mu^*$ denotes the invariant measure of \eqref{sde1}, and $Z$ is the path-dependent counterpart of \eqref{sde1}:
    \begin{equation}\label{sde3}
        \d Z_t=b\big(Z_t,\Ecal_t(Z)\big)\d t+\d B_t,\quad Z_0=X_0.
    \end{equation}
    Furthermore, we also consider weighted empirical measures as detailed in Section \ref{weight_section}. 

    Taking advantage of the non-degeneracy of \eqref{sde1} we employ a reflection coupling method (cf.~\cite{eberle2016reflection}) to compare \eqref{sde1} and \eqref{sde3} with their associated Markovian SDE
	\begin{equation}\label{sde2}
	 	\d Y_t=b(Y_t,\mu^*)\d t+\d B_t,\quad Y_0=X_0.
	 \end{equation}
    Another crucial aspect of our analysis involves establishing a Wasserstein convergence estimate for \eqref{sde2}.
    Notably, existing results in \cite{wang2019limit} and \cite{wang2022wasserstein} do not appear to be directly applicable to our case. 
    In light of this, we employ a technique introduced in \cite{riekert2022convergence} to establish convergence in the $1$-Wasserstein distance for the empirical measures of \eqref{sde2} (see Theorem \ref{WeightedMarkovConvergence}). 
    A similar convergence result was previously obtained in \cite{riekert2022convergence} for exponentially contractive discrete-time Markov chains through a combination of Kantorovich duality and Fourier analysis.
	 
    The processes described by \eqref{sde3} are commonly referred as self-interacting diffusions in the literature. 
    In contrast to previous studies on this topic (cf.~\cite{benaim2002self,10.1214/EJP.v17-2121}, etc.), our research provides a quantitative characterization of the convergence of empirical measures (also known as occupation measures in the literature). We also remark that we extend our analysis to encompass a broader range of equation forms and do so without necessitating symmetry or convexity conditions on the coefficients.

    The rest of our paper is organized as follows. 
    Section \ref{results} states the main assumptions and theorems. In Sections \ref{Proof} and \ref{WeightedProof}, we engage in the rigorous proofs of Theorems \ref{DistributionDependent}, \ref{PathDependent} and \ref{WeightedPathDependent} respectively. 
    Section \ref{appendix} studies the convergence in Wasserstein distance for empirical measures of exponentially contractive Markov processes, which can directly lead to Corollary \ref{WeightedMarkovSDE} and thus lays a foundation for main theorems in Section \ref{results}. In Section \ref{Experiments}, numerical results are demonstrated.

    We conclude the introduction by clarifying some notations used in this paper. 
    We denote by $\abs{\cdot}$ and $\langle\cdot,\cdot\rangle$ the Euclidean norm and the inner product respectively in $\Real^d$,  and by $\norm{\cdot}$ the Frobenius norm of a matrix. Let $(\Xcal,\rho)$ be a complete separable metric space, and we denote by $\Pcal(\Xcal)$ the space of all Borel probability measures on $\Xcal$. For any $\alpha\in(0,1]$ and function $f:\Xcal\to\mathbb{C}$, the $\alpha$-${\rm H\ddot{o}lder}$ constant of $f$ is denoted by $\abs{f}_{\alpha}:=\sup\limits_{x\neq y}\frac{\abs{f(x)-f(y)}}{\rho(x,y)^{\alpha}}$; specifically, $\abs{\cdot}_1$ denotes the Lipschitz constant. For $p\geq 1$, the $p$-Wasserstein distance between $\mu,\,\nu\in\Pcal(\Xcal)$ is defined as 
	 $$
	 \Wcal_p(\mu,\nu):=\inf\Big\{\Big(\e\big[\rho(\xi,\zeta)^p\big]\Big)^{1/p}:\, \Lcal(\xi)=\mu,\, \Lcal(\zeta)=\nu\Big\}.$$
	 $\Pcal_p(\Xcal)$ denotes the space of all $\mu\in\Pcal(\Xcal)$ with finite $p$-th moment, i.e. $\rho(\cdot,y)\in L^p(\Xcal,\mu)$ for all $y\in\Xcal$; specifically, $\Pcal_p(\Real^d):=\{\mu\in\Pcal(\Real^d):\, \int_{\Real^d}\abs{x}^p\mu(\dx)<\infty\}$. $\Pcal_p(\Xcal)$ becomes a complete metric space if equipped with the distance $\Wcal_p$; for $p=1$, the $\Wcal_1$ distance between any $\mu,\,\nu\in\Pcal_1(\Xcal)$ is characterized by Kantorovich duality (cf. \cite{optimal}): 
    $$
    \Wcal_1(\mu,\nu)=\sup\limits_{\abs{f}_1\leq1}\abs{\mu(f)-\nu(f)}.$$ 

    \section{Main results}\label{results}
    This section comprises two parts, dealing with unweighted and weighted empirical measures, respectively. 
    
    \subsection{Convergence of empirical measures}\label{uniform}
        First and foremost, we assume the dissipativity of $b(\cdot,\cdot)$:
	\begin{assum}\label{dissi}
		For the continuous mapping $b:\Real^d\times\Pcal_1(\Real^d)\to\Real^d$, there exists an increasing function $\kappa:(0,\infty)\to\Real$ satisfying $\lim\limits_{r\to\infty}\kappa(r)=:\kappa_{\infty}>0,\, \lim\limits_{r\to 0^+}r\kappa(r)=0$ and that 
		$$
		\big\langle x-y,b(x,\mu)-b(y,\mu)\big\rangle\leq -\kappa(\abs{x-y})\abs{x-y}^2$$
		for all $x,\,y\in\Real^d$ and $\mu\in\Pcal_1(\Real^d)$.
	\end{assum}
	
	After $\kappa(\cdot)$ is defined, we import an auxiliary function $f\in C^2[0,\infty)$ such that
	\begin{equation}\label{auxiliary}
			\left\{
			\begin{aligned}
				f^{\prime}(r)&=\frac{1}{2}\int_{r}^{\infty}s \exp\Big(-\frac{1}{2}\int_{r}^{s}\tau\kappa(\tau)\d \tau\Big)\d s,\quad \forall r\geq0,\\
			f(0)&=0	.
			\end{aligned}
			\right.
	\end{equation}
	Throughout the rest of this paper except Section \ref{appendix}, the term $f$ refers to the function defined by \eqref{auxiliary}.

	We then give the second assumption, the weak interaction property of $b(\cdot,\cdot)$:
	\begin{assum}\label{weaki}
		There exists a constant $0<\eta< \frac{1}{f^{\prime}(0)}$ such that
		$$
		\abs{b(x,\mu)-b(x,\nu)}\leq \eta \Wcal_1(\mu,\nu)$$
		for all $x\in\Real^d$ and $\mu,\,\nu\in\Pcal_1(\Real^d)$.
	\end{assum}
	\begin{re}
		In the original definition of $\kappa(\cdot)$ in Assumption \ref{dissi}, $\kappa_{\infty}$ doesn't have to be finite. In fact, even if $\kappa_{\infty}=\infty$, we can truncate $\kappa(\cdot)$ to $\kappa^L(\cdot):=\kappa(\cdot)\wedge L$ for some $L\in(0,\infty)$. This increases the value of $f^{\prime}(0)$, but Assumption \ref{weaki} still holds for the same $\eta$ as long as $L$ is large enough. In this paper, unless specifically stated, $\kappa_{\infty}$ is regarded as finite.
	\end{re}
	
	Assumptions \ref{dissi} and \ref{weaki} ensure the existence and uniqueness of the invariant measure $\mu^*\in\Pcal_1(\Real^d)$, since \eqref{sde1} is time homogeneous (cf. \cite[Remarks 2.8 and 5.1]{main}).
 
    For the Markovian SDE \eqref{sde2}, the uniform $L^q$ bound of its solution is required, which can be fulfilled by controlling the growth rates of $b(\cdot,\cdot)$ (cf. \cite[Remark 2.7]{main}):
	\begin{assum}\label{Lq}
		For the solution $Y=(Y_t)$ of \eqref{sde2}, there exist constants $q>1$ and $C_q\in(0,\infty)$ such that 
		$$
		\sup\limits_{t\geq0}\e[\abs{Y_t}^q]\leq C_q.$$
	\end{assum} 
 
    Now we begin to state the main theorems of this paper. For a distribution-dependent SDE, we have the following result:
	\begin{theo}\label{DistributionDependent}
		Under Assumptions \ref{dissi}, \ref{weaki} and \ref{Lq}, for any solution $X$ of the distribution-dependent SDE \eqref{sde1}, we have 
		$$
		\e\big[\Wcal_1(\Ecal_t(X),\mu^*)\big]=O(t^{-\varepsilon}),$$
		where $0<\varepsilon<\min\{\frac{1}{d}(1-\frac{1}{q}),\frac{1}{2}(1-\frac{1}{q})\}$.
	\end{theo}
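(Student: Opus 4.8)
The plan is to route the comparison through the Markovian diffusion $Y$ of \eqref{sde2} started at $Y_0=X_0$, and to split
\[
\Wcal_1\big(\Ecal_t(X),\mu^*\big)\le \Wcal_1\big(\Ecal_t(X),\Ecal_t(Y)\big)+\Wcal_1\big(\Ecal_t(Y),\mu^*\big).
\]
The second term is exactly the empirical-measure error for the Markovian SDE, so I would invoke Theorem \ref{WeightedMarkovConvergence} (through Corollary \ref{WeightedMarkovSDE}) to obtain $\e[\Wcal_1(\Ecal_t(Y),\mu^*)]=O(t^{-\varepsilon})$ with the stated range of $\varepsilon$; this is where Assumption \ref{Lq} is consumed, the exponent $\min\{\frac1d(1-\frac1q),\frac12(1-\frac1q)\}$ being the characteristic Wasserstein rate in dimension $d$ under a uniform $L^q$ bound. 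It then suffices to show that the first term is of strictly smaller order, namely $\e[\Wcal_1(\Ecal_t(X),\Ecal_t(Y))]=O(1/t)$, and here all the work specific to this theorem is concentrated.

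To control the first term I would build a reflection coupling of $X$ and $Y$: drive $Y$ by the Brownian motion obtained from $B$ by reflection across the hyperplane orthogonal to $e_t=(X_t-Y_t)/|X_t-Y_t|$, switching to synchronous coupling once the paths meet. Writing $r_t=|X_t-Y_t|$, the radial part obeys a one-dimensional equation $\d r_t=\gamma_t\,\dt+2\,\d W_t$ with $\gamma_t=\langle e_t,\,b(X_t,\Lcal(X_t))-b(Y_t,\mu^*)\rangle$. Decomposing the drift difference as $[b(X_t,\Lcal(X_t))-b(Y_t,\Lcal(X_t))]+[b(Y_t,\Lcal(X_t))-b(Y_t,\mu^*)]$ and using Assumption \ref{dissi} on the first bracket and Assumption \ref{weaki} on the second gives $\gamma_t\le-\kappa(r_t)r_t+\eta\,\Wcal_1(\Lcal(X_t),\mu^*)$. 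Applying Itô's formula to $f(r_t)$ and invoking the identity $2f''(r)-\kappa(r)rf'(r)=-r$, which is precisely what the defining relation \eqref{auxiliary} encodes, together with $-r\le-\tfrac1{f'(0)}f(r)$ and $0<f'(r)\le f'(0)$, I arrive at
\[
\frac{\d}{\dt}\,\e[f(r_t)]\le-\tfrac1{f'(0)}\,\e[f(r_t)]+\eta f'(0)\,\Wcal_1\big(\Lcal(X_t),\mu^*\big).
\]

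The remaining input is the decay of the forcing $\Wcal_1(\Lcal(X_t),\mu^*)$, which I would obtain from a parallel reflection coupling between $X$ and a stationary solution $\bar X$ with $\Lcal(\bar X_t)\equiv\mu^*$. The same computation yields the analogous inequality for $\e[f(|X_t-\bar X_t|)]$, and since $\Wcal_1(\Lcal(X_t),\mu^*)\le\e[|X_t-\bar X_t|]$ while the equivalence $c_*\,r\le f(r)\le f'(0)r$ (valid because $\kappa_\infty>0$ forces $f'$ to stay bounded away from $0$ and $\infty$) lets one pass between $\e[f(|X_t-\bar X_t|)]$ and $\e[|X_t-\bar X_t|]$, the subcriticality built into Assumption \ref{weaki} closes a Grönwall/self-consistency argument and produces exponential contraction $\Wcal_1(\Lcal(X_t),\mu^*)\le Ce^{-\lambda t}$. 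Feeding this back into the previous display and integrating shows $\e[f(r_t)]$, hence $\e[|X_t-Y_t|]$, decays exponentially; since $\pi_t:=\frac1t\int_0^t\delta_{(X_s,Y_s)}\,\d s$ is a coupling of $\Ecal_t(X)$ and $\Ecal_t(Y)$, one has the pathwise bound $\Wcal_1(\Ecal_t(X),\Ecal_t(Y))\le\frac1t\int_0^t|X_s-Y_s|\,\d s$, whence $\e[\Wcal_1(\Ecal_t(X),\Ecal_t(Y))]\le\frac1t\int_0^t\e[|X_s-Y_s|]\,\d s=O(1/t)=o(t^{-\varepsilon})$. Combining the two terms proves the theorem. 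I expect the main obstacle to be the two-layer coupling bookkeeping, and in particular verifying that the precise threshold in Assumption \ref{weaki} ($\eta<1/f'(0)$, matching the sharp homogeneous contraction rate produced by \eqref{auxiliary}) is enough to absorb the mean-field forcing through the Grönwall step once one passes between the $f$-distance and $\Wcal_1$; a secondary technical point is securing the uniform moments of $X$ needed to keep every Wasserstein distance finite.
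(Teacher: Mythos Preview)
Your proposal is correct and follows essentially the same route as the paper: split via the Markovian process $Y$, bound $\e[\Wcal_1(\Ecal_t(X),\Ecal_t(Y))]=O(1/t)$ by reflection coupling and the Lyapunov function $f$ (using the ODE $2f''-\kappa r f'=-r$ and the exponentially decaying forcing $\Wcal_1(\Lcal(X_t),\mu^*)$), then invoke Corollary \ref{WeightedMarkovSDE} for the Markovian remainder. The only differences are cosmetic: the paper implements the coupling in mollified form (smooth interpolation between reflection and synchronous via a parameter $\delta\to0^+$) rather than the hard switch at the merging time you describe, and it quotes the exponential decay $\Wcal_1(\Lcal(X_t),\mu^*)\le Ce^{-\lambda t}$ directly from \cite{main} instead of re-deriving it through a second coupling with a stationary solution.
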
 

	For a path-dependent SDE, the following estimate can be acquired:
	\begin{theo}\label{PathDependent}
		Under Assumptions \ref{dissi}, \ref{weaki} and \ref{Lq}, for any solution $Z$ of the path-dependent SDE \eqref{sde3}, we have
		$$
		\e\big[\Wcal_1(\Ecal_t(Z),\mu^*)\big]=O(t^{-\varepsilon}),$$
		where  $0<\varepsilon<\min\{\frac{1}{d}(1-\frac{1}{q}),\frac{1}{2}(1-\frac{1}{q}),1-\eta f^{\prime}(0)\}$.
	\end{theo}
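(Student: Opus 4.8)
The plan is to run the reflection coupling already constructed for Theorem \ref{DistributionDependent}, but now between the path-dependent process $Z$ and the Markovian process $Y$ of \eqref{sde2}, and then to transfer the convergence of $\Ecal_t(Y)$ (supplied by Corollary \ref{WeightedMarkovSDE}) to $\Ecal_t(Z)$. Set $W_t=Z_t-Y_t$, $r_t=\abs{W_t}$, $e_t=W_t/r_t$, and couple the driving noises by reflection so that, away from $r_t=0$, the radial part solves $\d r_t=\langle e_t, b(Z_t,\Ecal_t(Z))-b(Y_t,\mu^*)\rangle\,\dt+2\,\d\beta_t$ for a one-dimensional Brownian motion $\beta$ (the reflected noise is purely radial, so there is no It\^o correction). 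First I would split the drift difference as $[b(Z_t,\mu^*)-b(Y_t,\mu^*)]+[b(Z_t,\Ecal_t(Z))-b(Z_t,\mu^*)]$, control the first bracket by Assumption \ref{dissi} (giving $\langle e_t,\cdot\rangle\le-\kappa(r_t)r_t$) and the second by Assumption \ref{weaki} (giving a term $\le\eta\,\Wcal_1(\Ecal_t(Z),\mu^*)$), so that It\^o's formula for $f(r_t)$ yields
$$\d f(r_t)\le\big[\,2f''(r_t)-r_t\kappa(r_t)f'(r_t)+\eta f'(r_t)\,\Wcal_1(\Ecal_t(Z),\mu^*)\,\big]\dt+\d M_t.$$

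The first key computation is the algebraic identity satisfied by the auxiliary function: differentiating \eqref{auxiliary} gives exactly $2f''(r)-r\kappa(r)f'(r)=-r$, and an integration-by-parts estimate exploiting that $\kappa$ is increasing gives $\kappa(r)f'(r)\le1$, whence $f$ is concave with $0\le f'(r)\le f'(0)$. Substituting the identity, bounding $f'(r_t)\le f'(0)$, and taking expectations reduces the above to $\tfrac{\d}{\dt}\e[f(r_t)]\le-\e[r_t]+\eta f'(0)\,\phi(t)$, where $\phi(t):=\e[\Wcal_1(\Ecal_t(Z),\mu^*)]$. Since $r_0=0$ (because $Z_0=Y_0=X_0$) and $f\ge0$, integrating in time and discarding the nonnegative boundary term $\e[f(r_t)]$ yields
$$\int_0^t\e[r_s]\,\d s\;\le\;\eta f'(0)\int_0^t\phi(s)\,\d s.$$

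The second ingredient is that the pathwise occupation measure of $(Z_s,Y_s)_{s\le t}$ is a coupling of $\Ecal_t(Z)$ and $\Ecal_t(Y)$, so $\Wcal_1(\Ecal_t(Z),\Ecal_t(Y))\le\tfrac1t\int_0^t r_s\,\d s$ holds pathwise; with the triangle inequality and $\psi(t):=\e[\Wcal_1(\Ecal_t(Y),\mu^*)]$ this gives $\phi(t)\le\tfrac1t\int_0^t\e[r_s]\,\d s+\psi(t)$. Corollary \ref{WeightedMarkovSDE} provides $\psi(t)=O(t^{-\eps})$ for $\eps<\min\{\tfrac1d(1-\tfrac1q),\tfrac12(1-\tfrac1q)\}$, so $\int_0^t\psi\,\d s=O(t^{1-\eps})$. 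Writing $R(t)=\int_0^t\e[r_s]\,\d s$ and $\lambda=\eta f'(0)\in(0,1)$, the two displays combine into the singular Gronwall inequality
$$R(t)\;\le\;\lambda\int_0^t\frac{R(s)}{s}\,\d s+\lambda\int_0^t\psi(s)\,\d s.$$

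The final and most delicate step is to solve this inequality, and I expect it to be the main obstacle, since $\phi$ controls $R$ through the first display while $R$ controls $\phi$ through the third, a genuine feedback loop. The $1/s$ kernel forces power-law behavior: inserting the ansatz $R(s)\lesssim s^{1-\eps}$ produces $\tfrac{\lambda}{1-\eps}t^{1-\eps}$ from the integral term, so the estimate is self-consistent precisely when $\tfrac{\lambda}{1-\eps}<1$, i.e. $\eps<1-\eta f'(0)$ --- exactly the third bound imposed on $\eps$, and available because Assumption \ref{weaki} forces $\eta f'(0)<1$. I would make this rigorous by treating $[0,1]$ separately and applying an integrating-factor argument to $t\mapsto t^{-\lambda}\int_1^t R(s)/s\,\d s$, obtaining $R(t)=O(t^{1-\eps})$; then $\tfrac1t R(t)=O(t^{-\eps})$ and the bound $\phi(t)\le\tfrac1t R(t)+\psi(t)$ gives $\e[\Wcal_1(\Ecal_t(Z),\mu^*)]=O(t^{-\eps})$. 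The only remaining technical point is the rigorous construction of the reflection coupling between a path-dependent and a Markovian SDE near $r_t=0$, which I would inherit from the setup used for Theorem \ref{DistributionDependent}.
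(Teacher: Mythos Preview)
Your proposal is correct and follows essentially the same route as the paper: reflection coupling of $Z$ with the Markovian process $Y$, the differential inequality $\tfrac{\d}{\dt}\e[f(r_t)]\le -\e[r_t]+\eta f'(0)\,\phi(t)$ (plus a vanishing $\rho(\delta)$ term from the smoothing near $r_t=0$), the pathwise coupling bound $\phi(t)\le\tfrac1t\int_0^t\e[r_s]\,\d s+\psi(t)$, and then closing the resulting feedback inequality under the constraint $\varepsilon<1-\eta f'(0)$ with the Markovian input $\psi(t)=O(t^{-\varepsilon})$ from Corollary~\ref{WeightedMarkovSDE}. The only differences are cosmetic: the paper keeps the smoothing parameter $\delta$ explicit (taking $\rho(\delta(t))=O(t^{-1})$) and closes the same integral inequality by a contradiction argument on $\sup_{t}t^{\varepsilon}\int_0^1\varphi(ts)\,\d s$ rather than your integrating-factor argument on $t^{-\lambda}\int_1^t R(s)/s\,\d s$, but the two are equivalent.
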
 

	\subsection{Convergence of weighted empirical measures}\label{weight_section}
	For weighted cases, we first define the set of weight families 
	\begin{equation}\label{weight0}
	\mathbf{\Pi}:=\{w=(w_t)_{t\geq0}:w_t\in\Pcal[0,1]\}.
        \end{equation}
	For any stochastic process $X$ and weight family $w\in\mathbf{\Pi}$, we denote by $$\Ecal_t^w(X)=\int_{0}^{1}\delta_{X_{ts}} w_t(\d s)$$ the weighted empirical measures of $X$ for $t\geq0$. For any $\varepsilon\in(0,1]$, we consider the following two sets of special weight families:
	\begin{equation}\label{weight}
		\begin{aligned}
			\mathbf{\Pi}_1(\varepsilon):&=\left\{w\in\mathbf{\Pi}: \limsup\limits_{t\to\infty}\int_{0}^{1}s^{-\varepsilon}w_t(\d s)<\frac{1}{\eta \kappa_{\infty}f^{\prime}(0)^2} \right\}\\
			\mathbf{\Pi}_2(\varepsilon):&=\left\{\overline{w}\in\mathbf{\Pi}: \limsup\limits_{t\to\infty}\int_{0}^{1}t^{\varepsilon}\wedge s^{-\varepsilon}\overline{w}_t(\d s)<\infty,\right. \\
			&\qquad \left.\limsup\limits_{t\to\infty}\int_{0}^{1}\int_{0}^{1}t^{\varepsilon}\wedge \abs{s_1-s_2}^{-\varepsilon}\overline{w}_t(\d s_1)\overline{w}_t(\d s_2)<\infty\right\}\\
			&=\left\{\overline{w}\in\mathbf{\Pi}: \sup\limits_{t\geq0}\int_{0}^{1}t^{\varepsilon}\wedge s^{-\varepsilon}\overline{w}_t(\d s)<\infty,\right. \\
			&\qquad \left.\sup\limits_{t\geq0}\int_{0}^{1}\int_{0}^{1}t^{\varepsilon}\wedge \abs{s_1-s_2}^{-\varepsilon}\overline{w}_t(\d s_1)\overline{w}_t(\d s_2)<\infty\right\}.
		\end{aligned}
	\end{equation}

	\begin{re}
		For the Lebesgue measure $\mathbf{\lambda}$, i.e. $\mathbf{\lambda}_t(\d s)=\d s$ for all $t\geq0$, we have $\mathbf{\lambda}\in\mathbf{\Pi}_1(\varepsilon_1)\cap\mathbf{\Pi}_2(\varepsilon_2)$ for all $0<\varepsilon_1<1-\eta\kappa_{\infty}f^{\prime}(0)^2$ and $0<\varepsilon_2<1$. $\Ecal_t^{\mathbf{\lambda}}(\cdot)$ is equivalent to unweighted empirical measures $\Ecal_t(\cdot)$. \\
            Another typical example is discrete sampling $w^{\tau}=(w^{\tau}_t)$ with a fixed step length $\tau>0$:
 	 $$
 	 	w^{\tau}_t=\begin{cases}
 	 		\frac{1}{n}\sum_{k=1}^{n}\delta_{k\tau/t} &n\tau \leq t< (n+1)\tau,\\
 	 		\delta_0 &0\leq t<\tau,
 	 	\end{cases}$$
  	 i.e. for $Z=(Z_t)$ ,
  	 $$
  	 	\Ecal_t^{w^{\tau}}(Z)=\begin{cases}
  	 		\frac{1}{n}\sum_{k=1}^{n}\delta_{Z_{k\tau}} &n\tau \leq t< (n+1)\tau,\\
  	 		\delta_{Z_0} &0\leq t<\tau,
  	 	\end{cases}$$
   	 where $n=n(t):=\floor{t/\tau}$. Similar to the Lebesgue measure, $w^{\tau}\in\mathbf{\Pi}_1(\varepsilon_1)\cap\mathbf{\Pi}_2(\varepsilon_2)$ for all $0<\varepsilon_1<1-\eta\kappa_{\infty}f^{\prime}(0)^2$ and $0<\varepsilon_2<1$, regardless of the step length.   
        \end{re}
	
	For $\varepsilon_1,\varepsilon_2\in(0,1]$, and $w\in\mathbf{\Pi}_1(\varepsilon_1)\cap\mathbf{\Pi}_2(\varepsilon_2),\,\overline{w}\in\mathbf{\Pi}_2(\varepsilon_2)$, we consider the weighted path-dependent SDE:
	\begin{equation}\label{weighted_sde3}
		\d Z_t=b\big(Z_t,\Ecal_t^w(Z)\big)\d t+\d B_t,\quad Z_0=X_0,
	\end{equation}
	trying to estimate $\e\big[\Wcal_1(\Ecal_t^{\overline{w}}(Z),\mu^*)\big]$. Here $\mu^*$ still denotes the invariant measure of \eqref{sde1}; $b(\cdot,\cdot)$ is similar to the one in \eqref{sde1}, which satisfies Assumption \ref{dissi} and a strengthened version of Assumption \ref{weaki}: 
	\begin{assum}\label{weaki_plus}
		There exists a constant $0<\eta<\frac{1}{\kappa_{\infty}f^{\prime}(0)^2}\leq \frac{1}{f^{\prime}(0)}$ such that
		$$
		\abs{b(x,\mu)-b(x,\nu)}\leq \eta \Wcal_1(\mu,\nu)$$
		for all $x\in\Real^d$ and $\mu,\,\nu\in\Pcal_1(\Real^d)$.
	\end{assum} 
 
	For a weighted path-dependent SDE, the following estimate can be acquired:
	\begin{theo}\label{WeightedPathDependent}
		For $w\in\mathbf{\Pi}_1(\varepsilon_1)\cap\mathbf{\Pi}_2(\varepsilon_2),\, \overline{w}\in\mathbf{\Pi}_2(\varepsilon_2)$ and the solution $Z$ of the weighted path-dependent SDE \eqref{weighted_sde3}, which satisfies Assumptions \ref{dissi}, \ref{weaki_plus} and \ref{Lq}, we have 
		$$
		\e\big[\Wcal_1(\Ecal_t^{\overline{w}}(Z),\mu^*)\big]=O\left(t^{-\varepsilon}\right),$$
		where
		$0<\varepsilon<\min\{\varepsilon_1,\frac{\varepsilon_2}{d}(1-\frac{1}{q}),\frac{\varepsilon_2}{2}(1-\frac{1}{q})\}$. 
	\end{theo}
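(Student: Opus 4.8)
The plan is to run a reflection coupling between the weighted path-dependent SDE \eqref{weighted_sde3} and the Markovian reference SDE \eqref{sde2}, converting the measure dependence into an additive perturbation controlled by Assumption \ref{weaki_plus}, and then to close a self-referential Gr\"onwall inequality whose contraction constant is precisely governed by membership of $w$ in $\mathbf{\Pi}_1(\varepsilon_1)$. First I would couple the solution $Z$ of \eqref{weighted_sde3} and the solution $Y$ of \eqref{sde2}, both started at $X_0$, by reflecting the driving Brownian increments along $e_t:=(Z_t-Y_t)/\abs{Z_t-Y_t}$, so that $r_t:=\abs{Z_t-Y_t}$ satisfies $\d r_t=\langle e_t,\, b(Z_t,\Ecal_t^w(Z))-b(Y_t,\mu^*)\rangle\dt+2\,\d W_t$ for a one-dimensional Brownian motion $W$ (off the diagonal).

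Next I would apply It\^o's formula to $f(r_t)$ with $f$ from \eqref{auxiliary}. Splitting the drift as $b(Z_t,\Ecal_t^w(Z))-b(Y_t,\Ecal_t^w(Z))$ plus $b(Y_t,\Ecal_t^w(Z))-b(Y_t,\mu^*)$, Assumption \ref{dissi} bounds the first inner product by $-\kappa(r_t)r_t$, while Assumption \ref{weaki_plus} bounds the second by $\eta\,\Wcal_1(\Ecal_t^w(Z),\mu^*)$. The ODE defining $f$ gives the identity $2f''(r)-r\kappa(r)f'(r)=-r$, so the first-order contribution collapses to $-r_t$; combined with the concavity estimates $r/\kappa_\infty\le f(r)\le f'(0)\,r$ and $f'\le f'(0)$ this yields
\[
\d f(r_t)\le\Big[-\tfrac{1}{f'(0)}f(r_t)+\eta f'(0)\,\Wcal_1(\Ecal_t^w(Z),\mu^*)\Big]\dt+2f'(r_t)\,\d W_t .
\]
Taking expectations, writing $\phi(t):=\e[f(r_t)]$, and using $\Wcal_1(\Ecal_t^w(Z),\mu^*)\le\int_0^1 r_{ts}\,w_t(\d s)+\Wcal_1(\Ecal_t^w(Y),\mu^*)$ together with $\e[r_{ts}]\le\kappa_\infty\phi(ts)$ produces the integro-differential inequality
\[
\phi'(t)\le-\tfrac{1}{f'(0)}\phi(t)+\eta f'(0)\kappa_\infty\!\int_0^1\!\phi(ts)\,w_t(\d s)+\eta f'(0)\,\e\big[\Wcal_1(\Ecal_t^w(Y),\mu^*)\big].
\]
The inhomogeneous Markovian term is supplied by the weighted Markovian convergence (Theorem \ref{WeightedMarkovConvergence}, whence Corollary \ref{WeightedMarkovSDE}) under Assumption \ref{Lq}: since $w\in\mathbf{\Pi}_2(\varepsilon_2)$, one has $\e[\Wcal_1(\Ecal_t^w(Y),\mu^*)]=O\big(t^{-\min\{\frac{\varepsilon_2}{d}(1-1/q),\frac{\varepsilon_2}{2}(1-1/q)\}}\big)$.

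I would then solve this inequality with the power-law ansatz $\phi(t)\le M t^{-\varepsilon}$. After integrating the linear part, the feedback term contributes a factor $\eta f'(0)^2\kappa_\infty\int_0^1 s^{-\varepsilon}w_t(\d s)$, which the inclusion $w\in\mathbf{\Pi}_1(\varepsilon_1)\subseteq\mathbf{\Pi}_1(\varepsilon)$ keeps strictly below $1$ for all large $t$; this is exactly where the strengthened bound $\eta<1/(\kappa_\infty f'(0)^2)$ of Assumption \ref{weaki_plus} is used. Hence $\phi(t)$, and therefore $\e[r_t]$, is $O(t^{-\varepsilon})$ for every $\varepsilon<\min\{\varepsilon_1,\frac{\varepsilon_2}{d}(1-1/q),\frac{\varepsilon_2}{2}(1-1/q)\}$. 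Finally I transfer to $\overline w$ via $\Wcal_1(\Ecal_t^{\overline w}(Z),\mu^*)\le\int_0^1 r_{ts}\,\overline w_t(\d s)+\Wcal_1(\Ecal_t^{\overline w}(Y),\mu^*)$, bounding the first term by $O(t^{-\varepsilon})$ using the rate on $\e[r_t]$ and the integrability of $s^{-\varepsilon}$ granted by $\overline w\in\mathbf{\Pi}_2(\varepsilon_2)$, and the second by the Markovian convergence applied to $\overline w$.

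The hard part is Step four: closing the self-referential inequality, since the feedback couples $\phi(t)$ to its own past values $\phi(ts)$ across all scales $s\in(0,1]$ weighted by $w_t$, rather than to a single lagged value. Because the $\mathbf{\Pi}_1(\varepsilon_1)$ condition controls this feedback only through a $\limsup$, I must separately absorb the transient regime (small $t$) using the a priori moment bounds of Assumption \ref{Lq}, and then verify that the power-law ansatz is self-consistent uniformly in $t$, keeping track that the constant multiplying $\int_0^1 s^{-\varepsilon}w_t(\d s)$ is exactly $\eta\kappa_\infty f'(0)^2$ so that the strict inequality defining $\mathbf{\Pi}_1$ indeed yields genuine contraction.
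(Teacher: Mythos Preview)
Your proposal is correct and follows essentially the same route as the paper's proof in Section~\ref{WeightedProof}: reflection coupling of \eqref{weighted_sde3} with \eqref{sde2}, It\^o on $f(|V_t|)$ to obtain the integro-differential inequality \eqref{WeightedODI2.2}, closing it via the $\mathbf{\Pi}_1(\varepsilon_1)$ contraction (the paper outsources this step to \cite[Lemma~4.1]{du2023empirical}, which is precisely your power-law ansatz argument), and then transferring to $\overline w$ via Corollary~\ref{WeightedMarkovSDE}. The only point you gloss over is the $\delta$-regularization of the reflection map (the paper's $\lambda,\pi$ cutoffs with $\rho(\delta(t))=O(t^{-1})$), needed to make the coupling well-defined through the diagonal; this is a routine device and does not affect the structure of your argument.
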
 
	
	\section{Proof of Theorems \ref{DistributionDependent} and \ref{PathDependent}}\label{Proof}
	\subsection{Distribution-dependent equations}
	We take smooth functions $\lambda,\,\pi:\Real^d\to\Real$ satisfying $\lambda(\cdot)^2+\pi(\cdot)^2=1$ and 
	$$
	\lambda(x)=\begin{cases}
		1, &\abs{x}\geq\delta,\\
		0, &\abs{x}\leq\delta/2.
	\end{cases}$$
	Here the parameter $0<\delta<1$ will be optimized later. Then we define a reflection coupling $(\hat{X},\hat{Y})$ such that
	$$
	\left\{
	\begin{aligned}
		\d \hat{X}_t&=b\big(\hat{X}_t,\Lcal(\hat{X}_t)\big)\d t+ \lambda(\Delta_t)\d B_t+\pi(\Delta_t)\d \hat{B}_t,\\
		\d \hat{Y}_t&=b(\hat{Y}_t,\mu^*)\d t+ \lambda(\Delta_t)(I_d-2e_t e_t^\transpose)\d B_t+\pi(\Delta_t)\d \hat{B}_t, 
	\end{aligned}
	\right.$$
	where $\hat{B}_t$ is a Brownian motion independent of $(B_t,X_0)$, $\Delta_t:=\hat{X}_t-\hat{Y}_t$, and 
	$$
	e_t:=\begin{cases}
		\frac{\Delta_t}{\abs{\Delta_t}},  &\Delta_t\neq0,\\
		0,  &\Delta_t=0.
	\end{cases}$$
    We set the initial data $\hat{X}_0=\hat{Y}_0=X_0=Y_0$, and write $\Lcal(X_0)=\gamma_0$. Applying Levy's characterization, we get $\Lcal(X)=\Lcal(\hat{X})$ and $\Lcal(Y)=\Lcal(\hat{Y})$.
	
	After some simple computation, we get 
	$$
	\begin{aligned}
		\d\Delta_t&=\Big(b\big(\hat{X}_t,\Lcal(\hat{X}_t)\big)-b(\hat{Y}_t,\mu^*)\Big)\d t+2\lambda(\Delta_t)e_t e_t^\transpose\d B_t\\
		&=\Big(b\big(\hat{X}_t,\Lcal(\hat{X}_t)\big)-b\big(\hat{Y}_t,\Lcal(\hat{X}_t)\big)\Big)\d t\\
            &\ +\Big(b\big(\hat{Y}_t,\Lcal(\hat{X}_t)\big)-b(\hat{Y}_t,\mu^*)\Big)\d t+2\lambda(\Delta_t)e_t e_t^\transpose\d B_t\\
		&:=(\Delta b_t^1+\Delta b_t^2)\d t+2\lambda(\Delta_t)e_t e_t^\transpose\d B_t,
	\end{aligned}$$
	where $\Delta b_t^1=b\big(\hat{X}_t,\Lcal(\hat{X}_t)\big)-b\big(\hat{Y}_t,\Lcal(\hat{X}_t)\big)$ and $\Delta b_t^2=b\big(\hat{Y}_t,\Lcal(\hat{X}_t)\big)-b(\hat{Y}_t,\mu^*)$.
	By the Ito--Tanaka formula, we have
	\begin{equation}\label{ito1.1}
		\d\abs{\Delta_t}=\frac{1}{2\abs{\Delta_t}}\Big(2\big\langle\Delta_t,\Delta b_t^1+\Delta b_t^2\big\rangle\Big)\d t+2\lambda(\Delta_t)e_t^\transpose\d B_t.
	\end{equation}
	By Assumptions \ref{dissi} and \ref{weaki}, we get
	\begin{equation}\label{ito1.2}
		\d\abs{\Delta_t}\leq-\kappa(\abs{\Delta_t})\abs{\Delta_t}\d t+\eta \Wcal_1\big(\Lcal(\hat{X}_t),\mu^*\big)\d t+2\lambda(\Delta_t)e_t^\transpose\d B_t.
	\end{equation}

        From \eqref{auxiliary}, we notice that $f$ is strictly increasing on $[0,\infty)$ and satisfies the following ordinary differential equation for all $r>0$:
	\begin{equation}\label{ODE}
	2f^{\prime\prime}(r)-r\kappa(r)f^{\prime}(r)=-r.
        \end{equation}
	By \cite[Lemma 5.1]{main}, we have $f^{\prime\prime}(r)\leq0$ and
	\begin{equation}\label{le1}
		\kappa_{\infty}^{-1}\leq f(r)/r\leq f^{\prime}(0) 
	\end{equation}
    for all $r>0$. Thus we combine \eqref{ito1.1} with \eqref{ito1.2}, apply Ito's formula again to $f(\abs{\Delta_t})$, and get 
	\begin{equation}\label{distribution_ito}
		\begin{aligned}
			\d f(\abs{\Delta_t})&\leq\big(-\kappa(\abs{\Delta_t})\abs{\Delta_t}f^{\prime}(\abs{\Delta_t})+2f^{\prime\prime}(\abs{\Delta_t})\lambda(\Delta_t)^2\big)\d t\\
			&\ +\eta f^{\prime}(\abs{\Delta_t})\Wcal_1\big(\Lcal(\hat{X}_t),\mu^*\big)\d t\\
			&\ +2f^{\prime}(\abs{\Delta_t})\lambda(\Delta_t)e_t^\transpose\d B_t.
		\end{aligned}
	\end{equation}
	By \eqref{ODE}, we have 
	$$
	-\kappa(\abs{\Delta_t})\abs{\Delta_t}f^{\prime}(\abs{\Delta_t})+2f^{\prime\prime}(\abs{\Delta_t})\lambda(\Delta_t)^2=-\abs{\Delta_t}-2f^{\prime\prime}(\abs{\Delta_t})\pi(\Delta_t)^2.$$
	We write $\rho(r):=\sup
 \limits_{x\in[0,r]}(-2f^{\prime\prime}(x))$, and get
	$$
	-\kappa(\abs{\Delta_t})\abs{\Delta_t}f^{\prime}(\abs{\Delta_t})+2f^{\prime\prime}(\abs{\Delta_t})\lambda(\Delta_t)^2\leq-\abs{\Delta_t}+\rho(\delta).$$
        By \cite[Remarks 2.8 and 5.1]{main}, we have 
        $$
	\Wcal_1\big(\Lcal(\hat{X}_t),\mu^*\big)=\Wcal_1\big(\Lcal(X_t),\mu^*\big)\leq \kappa_{\infty}f^{\prime}(0)\exp\Big(-\big(\frac{1}{f^{\prime}(0)}-\eta\big)t\Big)\Wcal_1(\gamma_0,\mu^*).$$
	
	Combining the above and taking expectation on both sides of \eqref{distribution_ito}, we have
	$$
	\begin{aligned}
		\frac{\d}{\d t}\e[f(\abs{\Delta_t})]&\leq -\e[\abs{\Delta_t}]+\rho(\delta)+\eta f^{\prime}(0)\Wcal_1\big(\Lcal(\hat{X}_t),\mu^*\big)\\
		&\leq -\frac{1}{f^{\prime}(0)}\e[f(\abs{\Delta_t})]+\rho(\delta)+\eta \kappa_{\infty}f^{\prime}(0)^2\exp\Big(-\big(\frac{1}{f^{\prime}(0)}-\eta\big)t\Big)\Wcal_1(\gamma_0,\mu^*).
	\end{aligned}$$
	By Gronwall's inequality, we get	
	\begin{equation}\label{gronwall01}
			\e[f(\abs{\Delta_t})]\leq f^{\prime}(0)\rho(\delta)+\kappa_{\infty}f^{\prime}(0)^2\Wcal_1\left(\gamma_0,\mu^*\right)\exp\Big(-\big(\frac{1}{f^{\prime}(0)}-\eta\big)t\Big).
	\end{equation}
    For empirical distributions $\Ecal_t(X)$ and $\Ecal_t(Y)$, using the definition of Wasserstein distance, we have
	\begin{equation}\label{empirical}
		\Wcal_1\big(\Ecal_t(\hat{X}),\Ecal_t(\hat{Y})\big)\leq \frac{1}{t}\int_{0}^{t}\abs{\Delta_s}\d s\leq \frac{\kappa_{\infty}}{t}\int_{0}^{t}f(\abs{\Delta_s})\d s.
	\end{equation}
    Thus, combining \eqref{empirical}, \eqref{gronwall01} and \eqref{le1} we get
    $$
    \begin{aligned}
    	\e\big[\Wcal_1(\Ecal_t(X),\Ecal_t(Y))\big]&=\e\left[\Wcal_1\big(\Ecal_t(\hat{X}),\Ecal_t(\hat{Y})\big)\right]\\
    	&\leq \frac{\kappa_{\infty}}{t}\int_{0}^{t}\e[f(\abs{\Delta_s})]\d s\\
    	&\leq\kappa_{\infty}f^{\prime}(0)\rho(\delta)+\frac{1}{t} \frac{\kappa_{\infty}^2f^{\prime}(0)^2\Wcal_1\left(\gamma_0,\mu^*\right)}{\frac{1}{f^{\prime}(0)}-\eta}.
    \end{aligned}$$
	Let $\delta\to0^+$, we have 
	\begin{equation}\label{distribution_final}
		\e\big[\Wcal_1(\Ecal_t(X),\Ecal_t(Y))\big]\leq \frac{\kappa_{\infty}^2f^{\prime}(0)^3\Wcal_1\left(\gamma_0,\mu^*\right)}{1-\eta f^{\prime}(0)}\frac{1}{t}=O(\frac{1}{t}).
	\end{equation}

	Combining \eqref{distribution_final} with Corollary \ref{WeightedMarkovSDE}, we finish the proof of Theorem \ref{DistributionDependent}.
	
	\subsection{Path-dependent equations}
	Just as we did in the previous section, we compare the path-dependent SDE \eqref{sde3} to the Markovian one \eqref{sde2}, and get a reflection coupling $(\tilde{Z},\tilde{Y})$:
	$$
	\left\{
	\begin{aligned}
		\d \tilde{Z}_t&=b\big(\tilde{Z}_t,\Ecal_t(\tilde{Z})\big)\d t+\lambda(V_t)\d B_t+\pi(V_t)\d \hat{B}_t,\\
		\d \tilde{Y}_t&=b(\tilde{Y}_t,\mu^*)\d t+ \lambda(V_t)(I_d-2u_t u_t^\transpose)\d B_t+\pi(V_t)\d \hat{B}_t, 
	\end{aligned}
	\right.$$
	where $\tilde{Z}_0=\tilde{Y}_0=Z_0=Y_0,\, V_t:=\tilde{Z}_t-\tilde{Y}_t$, and
	$$
	u_t:=\begin{cases}
		\frac{V_t}{\abs{V_t}},  &V_t\neq0,\\
		0,  &V_t=0.
	\end{cases}$$
	
	Similarly we get
	$$
	\begin{aligned}
		\d f(\abs{V_t})&\leq\big(-\kappa(\abs{V_t})\abs{V_t}f^{\prime}(\abs{V_t})+2f^{\prime\prime}(\abs{V_t})\lambda(V_t)^2\big)\d t\\
		&\ +\eta f^{\prime}(\abs{V_t})\Wcal_1\big(\Ecal_t(\tilde{Z}),\mu^*\big)\d t+2f^{\prime}(\abs{V_t})\lambda(V_t)e_t^\transpose\d B_t,
	\end{aligned}$$
	and then
	$$
	\begin{aligned}
		\frac{\d}{\d t}\e[f(\abs{V_t})]&\leq -\e[\abs{V_t}]+\rho(\delta)+\eta f^{\prime}(0)\e\big[\Wcal_1\big(\Ecal_t(\tilde{Z}),\mu^*\big)\big]\\
		&\leq -\e[\abs{V_t}]+\rho(\delta)+\eta f^{\prime}(0)\left(\e\big[\Wcal_1\big(\Ecal_t(\tilde{Z}),\Ecal_t(\tilde{Y})\big)\big]+\e\big[\Wcal_1\big(\Ecal_t(\tilde{Y}),\mu^*\big)\big]\right)\\
		&\leq -\e[\abs{V_t}]+\eta f^{\prime}(0)\int_{0}^{1}\e[\abs{V_{ts}}]\d s+\eta f^{\prime}(0)\e\big[\Wcal_1\big(\Ecal_t(\tilde{Y}),\mu^*\big)\big]+\rho(\delta).
	\end{aligned}$$
	We define functions $\varphi(t):=\e[\abs{V_t}]$ and $\psi(t):= \e[f(\abs{V_t})]$, which satisfy $$\varphi,\,\psi\geq0,\, \varphi(0)=\psi(0)=0,\, \kappa_{\infty}^{-1}\varphi\leq\psi\leq f^{\prime}(0)\varphi,$$
	and take $\delta=\delta(t)$ such that $\rho(\delta(t))=O(t^{-1})$ as $t\to\infty$.
	We use $C>0$ to represent a constant independent of $t,\,\varphi$ and $\psi$, whose value may vary through lines. Then by Corollary \ref{WeightedMarkovSDE}, for any $0<\varepsilon_0<\min\{\frac{1}{2}(1-\frac{1}{q}),\frac{1}{d}(1-\frac{1}{q})\}$, we have 
	$$
	\eta f^{\prime}(0)\e\big[\Wcal_1\big(\Ecal_t(Y),\mu^*\big)\big]+\rho(\delta(t))\leq C(1\wedge t^{-\varepsilon_0}).$$ 
	Therefore, we get
	\begin{equation}\label{ODI}
		\frac{\d}{\d t}\psi(t)\leq -\varphi(t)+\eta f^{\prime}(0)\int_{0}^{1}\varphi(ts)\d s+C(1\wedge t^{-\varepsilon_0}).
	\end{equation} 	
 
	Integrating both sides of \eqref{ODI} from $0$ to $t$, we have 
	$$
	0\leq \psi(t)\leq -\int_{0}^{t}\varphi(\tau)\d \tau+\eta f^{\prime}(0)\int_{0}^t\Big(\int_0^1\varphi(s_1 s_2)\d s_1\Big)\d s_2+C(t\wedge t^{1-\varepsilon_0}),$$
	and therefore 
	$$
	\int_{0}^{t}\varphi(\tau)\d \tau\leq \eta f^{\prime}(0)\int_{0}^t\Big(\int_0^1\varphi(s_1 s_2)\d s_1\Big)\d s_2+C(t\wedge t^{1-\varepsilon_0})$$
	for all $t>0$.
	We divide both sides by $t$, and get
	$$
	\int_{0}^{1}\varphi(ts)\d s\leq\eta f^{\prime}(0)\int_{0}^{1}\int_{0}^{1}\varphi(ts_1 s_2)\d s_1\d s_2+C(1\wedge t^{-\varepsilon_0})$$
	for all $t\geq0$.
	
	Now we fix $C$, and claim $\sup\limits_{t\geq0}t^{\varepsilon}\int_{0}^{1}\varphi(ts)\d s\leq \frac{C}{1-\varepsilon-\eta f^{\prime}(0)}$ for any given $0<\varepsilon<\min\{\varepsilon_0, 1-\eta f^{\prime}(0)\}$. Otherwise we let $T:=\inf\{t\geq0:t^{\varepsilon}\int_{0}^{1}\varphi(ts)\d s>\frac{C}{1-\varepsilon-\eta f^{\prime}(0)}\}$, and by continuity we have $T^{\varepsilon}\int_{0}^{1}\varphi(Ts)\d s= \frac{C}{1-\varepsilon-\eta f^{\prime}(0)}=\sup\limits_{0\leq t\leq T}t^{\varepsilon}\int_{0}^{1}\varphi(ts)\d s$. Thus we have 
	$$
	\begin{aligned}
		\frac{C}{1-\varepsilon-\eta f^{\prime}(0)}&=T^{\varepsilon}\int_{0}^{1}\varphi(Ts)\d s\leq \eta f^{\prime}(0)\int_{0}^{1}T^{\varepsilon}\int_{0}^{1}\varphi(Ts_1 s_2)\d s_1\d s_2+C\\
		&=\eta f^{\prime}(0)\int_{0}^{1}s_2^{-\varepsilon}\Big((Ts_2)^{\varepsilon}\int_{0}^{1}\varphi(Ts_2 s_1)\d s_1\Big)\d s_2+C\\
		&\leq \eta f^{\prime}(0)T^{\varepsilon}\int_{0}^{1}\varphi(T s_1)\d s_1\int_{0}^{1}s_2^{-\varepsilon}\d s_2+C\\
		&\leq \frac{\eta f^{\prime}(0)}{1-\varepsilon}\cdot\frac{C}{1-\varepsilon-\eta f^{\prime}(0)}+C\\
		&=\frac{C}{1-\varepsilon-\eta f^{\prime}(0)}\big(1-\varepsilon\frac{1-\varepsilon-\eta f^{\prime}(0)}{1-\varepsilon}\big)\\
		&<\frac{C}{1-\varepsilon-\eta f^{\prime}(0)},
	\end{aligned}$$
	which leads to contradiction. Therefore, the claim holds, and we get 
	\begin{equation}\label{path_final}
		\e\big[\Wcal_1\left(\Ecal_t(Z),\Ecal_t(Y)\right)\big]\leq \int_{0}^{1}\varphi(ts)\d s=O(t^{-\varepsilon}).
	\end{equation}
	for $0<\varepsilon<\min\{\frac{1}{d}(1-\frac{1}{q}), \frac{1}{2}(1-\frac{1}{q}), 1-\eta f^{\prime}(0)\}$. 
	
	Combining \eqref{path_final} with Corollary \ref{WeightedMarkovSDE}, we complete the proof of Theorem \ref{PathDependent}.
	
	\begin{re}\label{pointwise}
		If we combine the above claim with \eqref{ODI}, we may also obtain the pointwise estimate $\varphi(t)=O(t^{-\varepsilon})$ and $\psi(t)=O(t^{-\varepsilon})$, where $0<\varepsilon<\min\{\frac{1}{d}(1-\frac{1}{q}), \frac{1}{2}(1-\frac{1}{q}), 1-\eta f^{\prime}(0)\}$. 
	\end{re} 

	\section{Proof of Theorem \ref{WeightedPathDependent}}\label{WeightedProof}
	For the weighted path-dependent SDE \eqref{weighted_sde3} with $w\in\mathbf{\Pi}_1(\varepsilon_1)\cap\mathbf{\Pi}_2(\varepsilon_2)$,
	we still compare it to the Markovian SDE \eqref{sde2}, getting a reflection coupling $(\tilde{Z},\tilde{Y})$:
	$$
	\left\{
	\begin{aligned}
		\d \tilde{Z}_t&=b\big(\tilde{Z}_t,\Ecal_t^w(\tilde{Z})\big)\d t+ \lambda(V_t)\d B_t+\pi(V_t)\d \hat{B}_t,\\
		\d \tilde{Y}_t&=b(\tilde{Y}_t,\mu^*)\d t+ \lambda(V_t)(I_d-2u_t u_t^\transpose)\d B_t+\pi(V_t)\d \hat{B}_t, 
	\end{aligned}
	\right.$$
	where $\tilde{Z}_0=\tilde{Y}_0=Z_0=Y_0,\, V_t:=\tilde{Z}_t-\tilde{Y}_t$, and
	$$
	u_t:=\begin{cases}
		\frac{V_t}{\abs{V_t}},  &V_t\neq0,\\
		0,  &V_t=0.
	\end{cases}$$
	
	Similarly we have
	$$
	\begin{aligned}
		\d f(\abs{V_t})&\leq\left(-\kappa(\abs{V_t})\abs{V_t}f^{\prime}(\abs{V_t})+2f^{\prime\prime}(\abs{V_t})\lambda(V_t)^2\right)\d t\\
		&\ +\eta f^{\prime}(\abs{V_t})\Wcal_1\big(\Ecal_t^w(\tilde{Z}),\mu^*\big)\d t+2f^{\prime}(\abs{V_t})\lambda(V_t)e_t^\transpose\d B_t,
	\end{aligned}$$
	and then
	$$
	\begin{aligned}
		\frac{\d}{\d t}\e&[f(\abs{V_t})]\leq -\e[\abs{V_t}]+\rho(\delta)+\eta f^{\prime}(0)\e\big[\Wcal_1\big(\Ecal_t^w(\tilde{Z}),\mu^*\big)\big]\\
		&\leq -\e[\abs{V_t}]+\rho(\delta)+\eta f^{\prime}(0)\left(\e\big[\Wcal_1\big(\Ecal_t^w(\tilde{Z}),\Ecal_t^w(\tilde{Y})\big)\big]+\e\big[\Wcal_1\big(\Ecal_t^w(\tilde{Y}),\mu^*\big)\big]\right)\\
		&\leq -\e[\abs{V_t}]+\rho(\delta)+\eta f^{\prime}(0)\Big(\int_{0}^{1}\e[\abs{V_{ts}}]w_t(\d s)+\e\big[\Wcal_1\big(\Ecal_t^w(\tilde{Y}),\mu^*\big)\big]\Big)\\
		&= -\e[\abs{V_t}]+\eta f^{\prime}(0)\int_{0}^{1}\e[\abs{V_{ts}}]w_t(\d s)+\eta f^{\prime}(0)\e\big[\Wcal_1\big(\Ecal_t^w(\tilde{Y}),\mu^*\big)\big]+\rho(\delta).
	\end{aligned}$$
	Just as we did before, we define functions $\varphi(t):=\e[\abs{V_t}]$ and $\psi(t):= \e[f(\abs{V_t})]$, which satisfy $$\varphi,\,\psi\geq0,\, \varphi(0)=\psi(0)=0,\, \kappa_{\infty}^{-1}\varphi\leq\psi\leq f^{\prime}(0)\varphi,$$
	and take $\delta=\delta(t)$ such that $\rho(\delta(t))=O(t^{-1})$ as $t\to\infty$. 	
	By Corollary \ref{WeightedMarkovSDE}, for any $0<\varepsilon<\min\{\frac{\varepsilon_2}{2}(1-\frac{1}{q}),\frac{\varepsilon_2}{d}(1-\frac{1}{q})\}$, we have 
	$$
	\eta f^{\prime}(0)\e\big[\Wcal_1\big(\Ecal_t^w(\tilde{Y}),\mu^*\big)\big]=\eta f^{\prime}(0)\e\big[\Wcal_1\big(\Ecal_t^w(Y),\mu^*\big)\big]\leq M t^{-\varepsilon},$$ 
	where $M>0$ is a constant independent of $t$, $\varphi$ and $\psi$. Thus we get
	\begin{equation}\label{WeightedODI2.1}
		\frac{\d}{\d t}\psi(t)\leq -\varphi(t)+\eta f^{\prime}(0)\int_{0}^{1}\varphi(ts)w_t(\d s)+Mt^{-\varepsilon}
	\end{equation} 
	and therefore
	\begin{equation}\label{WeightedODI2.2}
		\frac{\d}{\d t}\psi(t)\leq -\frac{1}{f^{\prime}(0)}\psi(t)+\eta \kappa_{\infty}f^{\prime}(0)\int_{0}^{1}\psi(ts)w_t(\d s)+Mt^{-\varepsilon}.
	\end{equation}
	By \cite[Lemma 4.1]{du2023empirical}, for any $0<\varepsilon<\min\{\frac{\varepsilon_2}{d}(1-\frac{1}{q}),\frac{\varepsilon_2}{2}(1-\frac{1}{q}),\varepsilon_1\}$ we get
	$$
	\psi(t)\leq C t^{-\varepsilon}$$
	where the constant $C>0$ is independent of $t$ and $\psi$. By continuity of $\psi$, there exists a constant $\tilde{C}\in[C,\infty)$ independent of $t$ such that 
	$$
	\psi(t)\leq \tilde{C} (1\wedge t^{-\varepsilon})$$
	for all $t\geq0$.
		
	For $\overline{w}\in \mathbf{\Pi}_2(\varepsilon_2)$ and $0<\varepsilon<\min\{\frac{\varepsilon_2}{d}(1-\frac{1}{q}),\frac{\varepsilon_2}{2}(1-\frac{1}{q}),\varepsilon_1\}$, we have 
	$$
	\begin{aligned}
		\e\big[\Wcal_1\big(\Ecal_t^{\overline{w}}(Z),\Ecal_t^{\overline{w}}(Y)\big)\big]&\leq \int_{0}^{1}\varphi(ts)\overline{w}_t(\d s)\\
		&\leq \kappa_\infty\int_{0}^{1}\psi(ts)\overline{w}_t(\d s)\\
		&\leq \tilde{C}t^{-\varepsilon}\int_{0}^{1}t^{\varepsilon}\wedge s^{-\varepsilon}\overline{w}_t(\d s).
	\end{aligned}$$
	As $\varepsilon<\varepsilon_2$, we have $t^{\varepsilon}\wedge s^{-\varepsilon}\leq t^{\varepsilon_2}\wedge s^{-\varepsilon_2}$ for $t\geq1$, and thus 
	$\sup\limits_{t\geq0}\int_{0}^{1}t^{\varepsilon}\wedge s^{-\varepsilon}\overline{w}_t(\d s)<\infty.$
	Therefore, we have 
	\begin{equation}\label{weight_compare}
		 \e\big[\Wcal_1\left(\Ecal_t^{\overline{w}}(Z),\Ecal_t^{\overline{w}}(Y)\right)\big]=O(t^{-\varepsilon})
	\end{equation}
	for any $0<\varepsilon<\min\{\frac{\varepsilon_2}{d}(1-\frac{1}{q}),\frac{\varepsilon_2}{2}(1-\frac{1}{q}),\varepsilon_1\}$.
	
	Combining \eqref{weight_compare} with Corollary \ref{WeightedMarkovSDE}, we complete the proof of Theorem \ref{WeightedPathDependent}.
		
    \section{Wasserstein convergence for empirical measures of Markov processes}\label{appendix}
    By \cite[Remarks 2.8 and 5.1]{main}, for all solutions $Y=(Y_t)$ and $\tilde{Y}=(\tilde{Y}_t)$ of the Markovian SDE \eqref{sde2}, we have 
	\begin{equation}\label{contractive0}
		\Wcal_1\big(\Lcal(Y_t),\Lcal(\tilde{Y}_t)\big)\leq D\exp(-ct)\Wcal_1\big(\Lcal(Y_0),\Lcal(\tilde{Y}_0)\big),\quad \forall t\geq0,
	\end{equation}
	with constants $D=\kappa_{\infty}f^{\prime}(0)\geq1$ and $c=1/f^{\prime}(0)>0$. This means \eqref{sde2} is an exponentially contractive Markov process with a unique invariant measure $\mu^*\in\Pcal_1(\Real^d)$ identical to that of \eqref{sde1} (cf. \cite{wang2018distribution}). Hence in this section, we deal with empirical approximation to invariant measures of exponentially contractive Markov processes. 
		
    We consider a continuous-time Markov process $Y=(Y_t)_{t\geq0}$ on a complete separable metric space $(\Xcal, \rho)$, adapted to a complete filtered probability space $(\Omega, \mathscr{F},\mathscr{F}_t, \mathbb{P})$. Denote by $(P_t)_{t\geq0}$ the transition semigroup of $Y$ such that $\Lcal(Y_0)P_t:=\Lcal(Y_t)$, which satisfies the following two assumptions: 
            \begin{assum}\label{ContractiveMarkov}
			For the Markov transition semigroup $(P_t)_{t\geq0}$, there exist constants $D\geq1$ and $c>0$ such that 
			\begin{equation}\label{contractive}
				\Wcal_1(\mu_0 P_t,\nu_0 P_t)\leq D\exp(-ct)\Wcal_1(\mu_0,\nu_0)
			\end{equation}
			for all $\mu_0,\, \nu_0\in\Pcal_1(\Xcal)$ and $t\geq0$.
		\end{assum}
	 
		\begin{assum}\label{finite_measurable}
			For the Markov process $Y=(Y_t, P_t)_{t\geq0}$, we require $\limsup\limits_{t\to 0^+}\Wcal_1(\mu_0 P_t,\mu_0)<\infty$ for all $\mu_0\in\Pcal_1(\Xcal)$. Furthermore, we assume the measurability of $Y:\Omega\times[0,+\infty)\to\Xcal$ and $t\mapsto Y_t(\omega)$ for $\omega\in\Omega$ almost surely.
		\end{assum}
		
		\begin{re}\label{recontractive}
			To meet the conditions of Assumption \ref{ContractiveMarkov}, $(P_t)_{t\geq0}$ only needs to satisfy
			$$
			\Wcal_1(\delta_x P_t,\delta_y P_
			t)\leq D\exp(-ct)\Wcal_1(\delta_x,\delta_y)=D\exp(-ct)\rho(x,y)$$
			for all $x,\,y\in\Xcal$. Since for all $\mu,\,\nu\in\Pcal_1(\Xcal)$, we have 
			$$
			\begin{aligned}
				\Wcal_1(\mu P_t,\nu P_t)&= \sup\limits_{\abs{f}_1\leq1}\abs{(\mu P_t-\nu P_t)(f)}\\
				&=\sup\limits_{\abs{f}_1\leq1}\abs{(\mu-\nu)(P_t f)}\\
				&\leq \abs{P_t f}_1 \Wcal_1(\mu,\nu)\\
				&\leq D\exp(-ct)\Wcal_1(\mu,\nu),
			\end{aligned}$$
			where $P_t f(x):=\int_{\Xcal}f(y)P_t(x,\d y)$.
		\end{re}
		With the completeness of $(\Pcal_1(\Real^d), \Wcal_1)$, we can easily get the following result:
		\begin{prop}
			Under Assumptions \ref{ContractiveMarkov} and \ref{finite_measurable}, there exists a unique invariant measure $\mu^*\in\Pcal_1(\Xcal)$, i.e. $\mu^*P_t=\mu^*$ for all $t\geq0$, and 
			$$
			\Wcal_1(\mu_0 P_t,\mu^*)\leq D\exp(-ct)\Wcal_1(\mu_0,\mu^*)$$
			holds for all initial distributions $\mu_0\in\Pcal_1(\Xcal)$. 
		\end{prop}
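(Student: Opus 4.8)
The plan is to regard $(P_t)_{t\geq0}$ as a semigroup of maps on the complete metric space $(\Pcal_1(\Xcal),\Wcal_1)$ and to apply Banach's fixed-point theorem to a single time-$t_0$ map, then to upgrade the resulting fixed point to an invariant measure for the whole semigroup. The completeness of $(\Pcal_1(\Xcal),\Wcal_1)$ is already available, so the work splits into (i) checking that $P_t$ is a well-defined self-map of $\Pcal_1(\Xcal)$, (ii) invoking contraction at a large time, and (iii) promoting invariance from one time to all times.

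First I would fix a reference point $y_0\in\Xcal$ and verify that $P_t$ maps $\Pcal_1(\Xcal)$ into itself for every $t\geq0$; this is the only place where Assumption \ref{finite_measurable} is needed. Since $\limsup_{s\to0^+}\Wcal_1(\mu_0 P_s,\mu_0)<\infty$, there is some $s_0>0$ with $\Wcal_1(\mu_0 P_s,\mu_0)<\infty$ for all $s\in(0,s_0)$, and the triangle inequality $\Wcal_1(\mu_0 P_s,\delta_{y_0})\leq\Wcal_1(\mu_0 P_s,\mu_0)+\Wcal_1(\mu_0,\delta_{y_0})$ then gives $\mu_0 P_s\in\Pcal_1(\Xcal)$. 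For arbitrary $t>0$ I would write $t=ns$ with $s=t/n<s_0$ and prove $\mu_0 P_{ns}\in\Pcal_1(\Xcal)$ by induction on $n$: the inductive step uses the semigroup identity $\mu_0 P_{(k+1)s}=(\mu_0 P_s)P_{ks}$ together with Assumption \ref{ContractiveMarkov} applied to the pair $\mu_0 P_s,\mu_0\in\Pcal_1(\Xcal)$ to bound $\Wcal_1(\mu_0 P_{(k+1)s},\mu_0 P_{ks})\leq D\exp(-cks)\Wcal_1(\mu_0 P_s,\mu_0)<\infty$; summing these bounds against $\Wcal_1(\mu_0 P_s,\mu_0)$ via the triangle inequality yields $\mu_0 P_t\in\Pcal_1(\Xcal)$.

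Next I would choose $t_0$ large enough that $D\exp(-ct_0)<1$. By Assumption \ref{ContractiveMarkov} the map $\mu\mapsto\mu P_{t_0}$ is then a strict contraction of $(\Pcal_1(\Xcal),\Wcal_1)$ into itself, so Banach's fixed-point theorem produces a unique $\mu^*\in\Pcal_1(\Xcal)$ with $\mu^*P_{t_0}=\mu^*$. To promote this to invariance for every $t\geq0$, I would use commutativity of the semigroup: for any $t$ the measure $\mu^*P_t$ satisfies $(\mu^*P_t)P_{t_0}=\mu^*(P_{t_0}P_t)=(\mu^*P_{t_0})P_t=\mu^*P_t$, so $\mu^*P_t$ is also a fixed point of $P_{t_0}$, and uniqueness of the fixed point forces $\mu^*P_t=\mu^*$. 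The same argument gives uniqueness of the invariant measure: any invariant $\nu^*$ satisfies $\nu^*P_{t_0}=\nu^*$, hence $\nu^*=\mu^*$.

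Finally, the claimed decay estimate is immediate from invariance together with Assumption \ref{ContractiveMarkov}:
\[
\Wcal_1(\mu_0 P_t,\mu^*)=\Wcal_1(\mu_0 P_t,\mu^*P_t)\leq D\exp(-ct)\Wcal_1(\mu_0,\mu^*).
\]
I expect the only genuinely delicate point to be the first step, namely verifying $P_t\Pcal_1(\Xcal)\subseteq\Pcal_1(\Xcal)$: one must combine the near-zero control supplied by Assumption \ref{finite_measurable} with the semigroup structure and the contraction bound, since without it the time-$t_0$ map is not even well defined as a self-map of $\Pcal_1(\Xcal)$ and Banach's theorem cannot be applied. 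Everything else is a routine consequence of Banach's theorem and the commutation of the semigroup.
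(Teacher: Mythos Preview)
Your argument is correct and is precisely the standard Banach fixed-point route the paper has in mind when it says the result follows ``easily'' from the completeness of $(\Pcal_1(\Xcal),\Wcal_1)$; the paper does not spell out any details, so your write-up is simply a fleshed-out version of the same idea. Your careful verification that $P_t$ preserves $\Pcal_1(\Xcal)$ via Assumption~\ref{finite_measurable} and the semigroup/contraction bootstrap is exactly the point the paper suppresses.
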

	
		Now we let $\Xcal=\Real^d$, and $\rho$ be the Euclidean metric. We denote by $\mu_0\in\Pcal_1(\Real^d)$ the initial distribution of $(Y_t, P_t)$, and by  
		$$\mu_t:=\Ecal_t^{\overline{w}}(Y)=\int_{0}^{1}\delta_{Y_{ts}}\overline{w}_t(\d s), \quad \forall t>0$$
        the weighted empirical measures, where $\overline{w}\in\mathbf{\Pi}_2(\varepsilon)$ for some $\varepsilon\in(0,1]$ (recall the definition of $\mathbf{\Pi}_2(\varepsilon)$ in \eqref{weight}). Our target in this section is to estimate the convergence rate of $\e[\Wcal_1(\mu_t,\mu^*)]$ as $t\to\infty$. Thus, we require another assumption:
		\begin{assum}\label{LqMarkov}
			For the Markov process $(Y_t, P_t)_{t\geq0}$, there exist constants $q>1$ and $M_q\in(0,\infty)$ such that
			$$
			\sup\limits_{t\geq0}(\e[\abs{Y_t}^q])^{1/q}\leq M_q.$$
		\end{assum}
		
		\begin{re}
			It's easy to prove that the invariant measure $\mu^*$ also satisfies 
			$$
			\Big(\int_{\Real^d}\abs{x}^q\mu^*(\d x)\Big)^{1/q}\leq M_q.$$
		\end{re}
		
		\begin{re}
			If $q=\infty$, then Assumption \ref{LqMarkov} suggests $Y$ is supported on a compact set in $\Real^d$.
		\end{re}
	
		For $a,\,b\geq0$, we write $a\lesssim b$ if there exists a positive constant $C=C(d,D,c,\overline{w},\varepsilon)>0$ (independent of $t,\,q$ and $M_q$) such that $a\leq Cb$. We are going to prove the following result: 
		\begin{theo}\label{WeightedMarkovConvergence}
			For a continuous-time Markov process $(Y_t, P_t)_{t\geq0}$ that satisfies Assumptions \ref{ContractiveMarkov}, \ref{finite_measurable} and \ref{LqMarkov}, and a weight family $\overline{w}\in\mathbf{\Pi}_2(\varepsilon)$ for some $\varepsilon\in(0,1]$, we have  
			$$
			\e[\Wcal_1(\mu_t,\mu^*)]\lesssim M_q
			\begin{cases}
				\Big(\frac{(\log\hat{t}\,)^{d-2+\frac{1}{d}}}{\hat{t}^{\frac{1}{d}}}\Big)^{1-\frac{1}{q}},  &d\geq 3,\\
				\Big(\frac{\log \hat{t}}{\sqrt{\hat{t}}}\Big)^{1-\frac{1}{q}},  &d=2,\\
				\bigg(\sqrt{\frac{\log \hat{t}}{\hat{t}}}\bigg)^{1-\frac{1}{q}},  &d=1,
			\end{cases}$$
			for all large enough $t>0$, where $\hat{t}=t^{\varepsilon}$.
		\end{theo}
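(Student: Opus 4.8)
The plan is to follow the Fourier-analytic route of \cite{riekert2022convergence}, adapted to continuous time and to the temporal weights $\overline{w}$. By Kantorovich duality, $\Wcal_1(\mu_t,\mu^*)=\sup_{|f|_1\le1}|\mu_t(f)-\mu^*(f)|$, so the whole difficulty is to control a supremum over the infinite-dimensional unit ball of Lipschitz functions. The organizing idea is to pass to frequency space: write each test function as a superposition of characters $e_\xi(x)=e^{i\langle\xi,x\rangle}$ and reduce everything to the scalar quantity $\widehat{\mu_t}(\xi)-\widehat{\mu^*}(\xi)=\mu_t(e_\xi)-\mu^*(e_\xi)$, for which sharp second-moment bounds are available.

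First I would prove the key per-frequency estimate
\[
\e\big|\widehat{\mu_t}(\xi)-\widehat{\mu^*}(\xi)\big|^2\lesssim \frac{|\xi|^2 M_q^2}{\hat t^{\,2}}+\frac{\log\!\big(e+|\xi|M_q\big)}{\hat t},\qquad \hat t=t^\varepsilon.
\]
The bias term comes from $\e[\mu_t(e_\xi)]-\mu^*(e_\xi)=\int_0^1\big(\widehat{\Lcal(Y_{ts})}(\xi)-\widehat{\mu^*}(\xi)\big)\,\overline{w}_t(\d s)$: bounding $|\widehat{\Lcal(Y_{ts})}(\xi)-\widehat{\mu^*}(\xi)|\le|\xi|\,\Wcal_1(\Lcal(Y_{ts}),\mu^*)$, invoking the contraction \eqref{contractive} of Assumption \ref{ContractiveMarkov}, and using $e^{-cts}\lesssim \hat t^{-1}(t^\varepsilon\wedge s^{-\varepsilon})$ together with the first defining bound of $\mathbf{\Pi}_2(\varepsilon)$ in \eqref{weight}. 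The variance term is the heart of the matter: expanding $\mathrm{Var}(\mu_t(e_\xi))=\int_0^1\int_0^1\mathrm{Cov}\big(e_\xi(Y_{ts_1}),e_\xi(Y_{ts_2})\big)\,\overline{w}_t(\d s_1)\overline{w}_t(\d s_2)$, I would bound the covariance by conditioning on $\mathscr{F}_{ts_1}$ and using that $x\mapsto P_{t|s_2-s_1|}e_\xi(x)$ is $De^{-ct|s_2-s_1|}|\xi|$-Lipschitz (Remark \ref{recontractive}); combined with the trivial bound $|\mathrm{Cov}|\le1$ this gives $|\mathrm{Cov}|\lesssim\min\!\big(1,|\xi|M_q e^{-ct|s_1-s_2|}\big)$, and feeding this into the second defining bound of $\mathbf{\Pi}_2(\varepsilon)$ produces the factor $\hat t^{-1}\log(e+|\xi|M_q)$.

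Next I would reassemble $\Wcal_1$. Because the characters are not integrable on $\Real^d$ I would first truncate to a ball $B_R$: for a $1$-Lipschitz $f$ the mass of $\mu_t$ and $\mu^*$ outside $B_R$ contributes at most $\lesssim M_q^q R^{1-q}$ by H\"older and Assumption \ref{LqMarkov}. On $B_R$ I would periodize onto a torus of side $\sim R$, mollify $f$ at a scale $\delta$ (incurring a smoothing error $\lesssim\delta$ since $f$ is $1$-Lipschitz), and expand the mollified function in Fourier series, so that $\e|\mu_t(f)-\mu^*(f)|$ is bounded by a sum over frequencies $|k|\lesssim R/\delta$ of $|\widehat{f_\delta}(k)|\,(\e|\widehat{\mu_t}(k)-\widehat{\mu^*}(k)|^2)^{1/2}$. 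Applying Cauchy--Schwarz against the gradient energy $\sum_k|\widehat{f}(k)|^2(|k|/R)^2\lesssim1$ reduces this to the frequency sum $\frac{R}{\sqrt{\hat t}}\big(\sum_{0<|k|\le R/\delta}\frac{\log(e+(|k|/R)M_q)}{|k|^2}\big)^{1/2}$; evaluating this sum separately for $d=1$, $d=2$ and $d\ge3$ (where $\sum|k|^{-2}$ is bounded, logarithmic, and of order $(R/\delta)^{d-2}$ respectively) yields a base rate $R\,\rho_d(\hat t)$, with $\rho_d$ carrying exactly the logarithmic powers stated in the theorem once the smoothing scale $\delta$ is balanced against the Fourier term. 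Finally I would optimize over $R$: the total bound $M_q^q R^{1-q}+C R\,\rho_d(\hat t)$ is minimized at $R\sim M_q\,\rho_d(\hat t)^{-1/q}$, giving $\e[\Wcal_1(\mu_t,\mu^*)]\lesssim M_q\,\rho_d(\hat t)^{1-1/q}$, which is the claim.

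The main obstacle is the variance estimate together with the ensuing summation. The process is assumed contractive only in $\Wcal_1$ (Assumption \ref{ContractiveMarkov}), neither reversible nor uniformly ergodic, so temporal correlations must be controlled purely through the Lipschitz-contraction of $P_t$ acting on characters; turning this into usable decay requires the delicate interplay with the two defining inequalities of $\mathbf{\Pi}_2(\varepsilon)$, which is precisely what converts the physical time $t$ into the effective sample size $\hat t=t^\varepsilon$. Equally delicate is the bookkeeping in the frequency sum---tracking how the truncation radius $R$, the mollification scale $\delta$, and the frequency cutoff interact---since it is this step that generates the dimension-dependent logarithmic corrections and must be carried out carefully to obtain the sharp exponents $d-2+\tfrac1d$, $1$ and $\tfrac12$.
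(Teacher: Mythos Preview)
Your architecture is the paper's: Kantorovich duality, spatial truncation to a cube of side $\sim R$, Fourier expansion of the test function, Cauchy--Schwarz against its $H^1$ energy, a per-frequency second-moment bound, and optimization over the truncation parameters. The place where your plan is thin is precisely the crux of the argument---extracting the variance bound $\hat t^{-1}\log(e+|\xi|M_q)$ from $|\mathrm{Cov}|\lesssim\min\!\big(1,|\xi|M_q\,e^{-ct|s_1-s_2|}\big)$ using only $\overline w\in\mathbf\Pi_2(\varepsilon)$. That hypothesis controls $\iint(t^\varepsilon\wedge|s_1-s_2|^{-\varepsilon})\,\overline w_t(\d s_1)\overline w_t(\d s_2)$, and it is not immediate how to ``feed'' $\min(1,Ae^{-ct|s_1-s_2|})$ into it while retaining only logarithmic dependence on $A$: the obvious bound $Ae^{-ctu}\lesssim A(ctu)^{-\varepsilon}$ keeps the full factor $A=|\xi|M_q$ in front, which would make the subsequent frequency sum diverge. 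The paper's device---which your outline does not mention---is to work with the $\alpha$-H\"older constant of the characters (Lemma~\ref{lealpha02} and Proposition~\ref{propalpha_weighted}): from $|\mathbf e_k|_\alpha\lesssim\|k\|_\infty^\alpha$ one gets $|\mathrm{Cov}|\lesssim\|k\|_\infty^\alpha e^{-\alpha ct|s_1-s_2|}$, whose integral against $\overline w_t^{\otimes2}$ is $\lesssim\|k\|_\infty^\alpha(\alpha t)^{-\varepsilon}$ \emph{directly} from the $\mathbf\Pi_2(\varepsilon)$ bound; choosing $\alpha=1/\log J$ only at the end forces $\|k\|_\infty^\alpha\le e$ for $\|k\|_\infty\le J$ and turns $(\alpha t)^{-\varepsilon}$ into $(\log J)^\varepsilon/\hat t$. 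In your language the missing line is the interpolation $\min(1,x)\le x^\alpha$, which is exactly what converts Lipschitz contraction into an $\alpha$-contraction compatible with the weight hypothesis.

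A smaller point: if you genuinely mollify at scale $\delta$ (smoothing error $\lesssim\delta$) rather than use partial Fourier sums (Jackson error $\lesssim R(\log J)^d/J$), then balancing against the frequency sum gives the $\log\hat t$ exponent $1/d$ for $d\ge3$, not $d-2+\tfrac1d$. The exponents $d-2+\tfrac1d$, $1$, $\tfrac12$ stated in the theorem come specifically from the $(\log J)^d$ overshoot of the Dirichlet partial sums used in the paper, so either your bookkeeping is slightly off or your route would in fact yield a sharper logarithmic power than claimed.
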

		This theorem can directly lead to the SDE version:
		\begin{coro}\label{WeightedMarkovSDE}
			Under Assumptions \ref{dissi}, \ref{weaki} and \ref{Lq}, for any $\varepsilon\in(0,1],\, \overline{w}\in\mathbf{\Pi}_2(\varepsilon)$, and any solution $Y=(Y_t)_{t\geq0}$ of the Markovian SDE \eqref{sde2}, we have
			$$
			\e\big[\Wcal_1\big(\Ecal_t^{\overline{w}}(Y),\mu^*\big)\big]=O\left(t^{-\varepsilon}\right),$$
			where $0<\varepsilon<\min\{\frac{\varepsilon}{d}(1-\frac{1}{q}),\frac{\varepsilon}{2}(1-\frac{1}{q})\}$.
		\end{coro}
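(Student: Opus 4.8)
The plan is to obtain Corollary \ref{WeightedMarkovSDE} as a direct specialization of the abstract Theorem \ref{WeightedMarkovConvergence} to the Markovian SDE \eqref{sde2}. The only substantive work is to check that \eqref{sde2} satisfies the three hypotheses of that theorem and then to translate its dimension-dependent rate into the claimed $O(t^{-\varepsilon})$ form.

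First I would verify Assumption \ref{ContractiveMarkov}. This is in fact already supplied at the start of Section \ref{appendix}: the estimate \eqref{contractive0}, drawn from \cite[Remarks 2.8 and 5.1]{main}, gives exponential $\Wcal_1$-contraction with constants $D=\kappa_{\infty}f^{\prime}(0)\geq1$ and $c=1/f^{\prime}(0)>0$, so Assumption \ref{ContractiveMarkov} holds verbatim, and the associated unique invariant measure coincides with $\mu^*$. Next, Assumption \ref{LqMarkov} follows immediately from Assumption \ref{Lq} by taking $M_q=C_q^{1/q}$, which yields $\sup_{t\geq0}(\e[\abs{Y_t}^q])^{1/q}\leq M_q$. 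For Assumption \ref{finite_measurable}, the measurability of $(t,\omega)\mapsto Y_t(\omega)$ and the continuity of paths are standard for the additive-noise equation \eqref{sde2}; the short-time bound $\limsup_{t\to0^+}\Wcal_1(\mu_0 P_t,\mu_0)<\infty$ is obtained by coupling $Y_t$ with $Y_0$ along the same Brownian motion and estimating $\e\abs{Y_t-Y_0}$ through the drift term (controlled via the $L^q$ bound) and the Brownian increment, both of which vanish as $t\to0^+$.

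With the hypotheses in hand, I would invoke Theorem \ref{WeightedMarkovConvergence} with $\hat{t}=t^{\varepsilon}$. In each dimension the right-hand side is $M_q$ times a negative power of $\hat{t}$, namely $-\frac{1}{d}(1-\frac{1}{q})$ for $d\geq3$ and $-\frac{1}{2}(1-\frac{1}{q})$ for $d\in\{1,2\}$, multiplied by a power of $\log\hat{t}$. Substituting $\hat{t}=t^{\varepsilon}$ converts the polynomial factor into $t^{-\varepsilon\min\{\frac{1}{d},\frac{1}{2}\}(1-\frac{1}{q})}$, and since any power of $\log\hat{t}$ grows more slowly than an arbitrarily small power of $t$, the logarithmic factor is absorbed by choosing the exponent in the conclusion strictly below $\min\{\frac{\varepsilon}{d}(1-\frac{1}{q}),\frac{\varepsilon}{2}(1-\frac{1}{q})\}$. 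This gives $\e[\Wcal_1(\Ecal_t^{\overline{w}}(Y),\mu^*)]=O(t^{-\varepsilon})$ in the stated range.

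Since this is a corollary in the strict sense, there is no serious analytic obstacle beyond the bookkeeping just described. The one genuinely new ingredient is the short-time regularity needed for Assumption \ref{finite_measurable}, and the only point requiring a little care is the log-absorption step, which is precisely what forces the \emph{strict} inequality in the admissible range of the exponent.
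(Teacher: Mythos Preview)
Your proposal is correct and matches the paper's approach exactly: the paper states that Theorem \ref{WeightedMarkovConvergence} ``can directly lead to the SDE version'' and gives no further proof, leaving implicit precisely the hypothesis checks (via \eqref{contractive0} and Assumption \ref{Lq}) and the log-absorption step that you spell out. Your identification of the polynomial exponent as $\varepsilon\min\{1/d,1/2\}(1-1/q)$ and the use of the strict inequality to swallow the logarithmic factors is the intended argument.
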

            Without loss of generality, we assume $M_q=1$ (otherwise we replace the norm $\abs{\cdot}$ with $\abs{\cdot}/M_q$) throughout the rest of Section \ref{appendix}, which simplifies the proof of Theorem \ref{WeightedMarkovConvergence}.
		
        Following the ideas of \cite{kloeckner2020empirical,riekert2022convergence}, we implement Kantorovich duality $\sup\limits_{\abs{f}\leq1}\abs{\mu_t(f)-\mu(f)}$ to characterize $\Wcal_1(\mu_t,\mu^*)$; for a Lipschitz function $f$, we estimate $\abs{\mu_t(f)-\mu(f)}$ by approximating $f$ with its truncated Fourier series. Thus we need to study the Fourier basis function $$\mathbf{e}_k(x):=\exp(\pi i\langle k,x\rangle)$$ for all $k\in\ZZ^d,\, x\in\Real^d$, whose Lipschitz constant $\abs{\nabla \mathbf{e}_k}=\pi\abs{k}$ explodes as $\abs{k}\to\infty$. By contrast, for $\alpha\in(0,1]$ we have (cf. ~\cite[Lemma 4.2]{kloeckner2020empirical})
        \begin{equation}\label{fourier_base}
            \abs{\mathbf{e}_k}_{\alpha}\lesssim \norm{k}_{\infty}^{\alpha},
        \end{equation}
	which means the growth rate of $\abs{\mathbf{e}_k}_{\alpha}$ can be controlled by optimizing the value of $\alpha$. Therefore, we delve into the $\alpha$-${\rm H\ddot{o}lder}$ constant $\abs{\cdot}_\alpha$, acquiring Lemma \ref{lealpha02} and then Proposition \ref{propalpha_weighted}:
		\begin{lemma}\label{lealpha02}
			For $\alpha\in(0,1]$, let $W_{\alpha}$ be the 1-Wasserstein distance with respect to the revised metric $d_{\alpha}(x,y)=\abs{x-y}^{\alpha}$. Then we have
			\begin{equation}\label{markov_contraction}
				W_{\alpha}(\mu_0 P_t,\nu_0 P_t)\leq D^{\alpha}\exp(-\alpha ct)W_{\alpha}(\mu_0,\nu_0)
			\end{equation}
			for all $\mu_0,\,\nu_0\in\Pcal_1(\Real^d)$.\\
			Moreover, for all bounded $\alpha$-${\rm H\ddot{o}lder}$ functions $f:\Real^d\to\mathbb{C}$ and $s\geq t\geq 0$, we have
			\begin{align}
			\label{alpha_align01}	\abs{\e[f(Y_t)]-\mu^*(f)}&\lesssim \abs{f}_{\alpha}\exp(-\alpha ct)\\
			\label{alpha_align02}	\abs{{\rm Cov}(f(Y_s), f(Y_t))}&\lesssim \norm{f}_{\infty}\abs{f}_{\alpha}\exp(-\alpha c(s-t)).
			\end{align} 
		\end{lemma}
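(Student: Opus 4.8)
The plan is to handle the three assertions in sequence, the common tool being that $d_\alpha(x,y):=\abs{x-y}^\alpha$ is itself a metric on $\Real^d$ (subadditivity of $r\mapsto r^\alpha$ for $\alpha\in(0,1]$ gives the triangle inequality), so $W_\alpha$ is a genuine $1$-Wasserstein distance and the Kantorovich--Rubinstein duality $W_\alpha(\mu,\nu)=\sup_{\abs{g}_\alpha\le1}\abs{\mu(g)-\nu(g)}$ holds, with admissible test functions being exactly the $\alpha$-Hölder functions of constant at most $1$. I would first record the elementary comparison $W_\alpha(\mu,\nu)\le\Wcal_1(\mu,\nu)^\alpha$: applying Jensen's inequality (concavity of $r\mapsto r^\alpha$) to an optimal $\Wcal_1$-coupling $\pi$ gives $\int\abs{x-y}^\alpha\,\d\pi\le(\int\abs{x-y}\,\d\pi)^\alpha$.

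For \eqref{markov_contraction} I would not compose this comparison with the $\Wcal_1$-contraction directly, since that only bounds the left side by $D^\alpha e^{-\alpha ct}\Wcal_1(\mu_0,\nu_0)^\alpha$ and $\Wcal_1^\alpha\ge W_\alpha$ points the wrong way. Instead, combining the comparison with the Dirac form of Assumption \ref{ContractiveMarkov} (as in Remark \ref{recontractive}) first yields the contraction on point masses, $W_\alpha(\delta_x P_t,\delta_y P_t)\le\Wcal_1(\delta_x P_t,\delta_y P_t)^\alpha\le D^\alpha e^{-\alpha ct}\abs{x-y}^\alpha$. I would then lift to arbitrary $\mu_0,\nu_0$ exactly as in Remark \ref{recontractive} but with $d_\alpha$-duality: the point-mass bound says $\abs{P_t g}_\alpha\le D^\alpha e^{-\alpha ct}$ whenever $\abs{g}_\alpha\le1$, whence $W_\alpha(\mu_0 P_t,\nu_0 P_t)=\sup_{\abs{g}_\alpha\le1}\abs{(\mu_0-\nu_0)(P_tg)}\le D^\alpha e^{-\alpha ct}W_\alpha(\mu_0,\nu_0)$.

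For \eqref{alpha_align01}, invariance $\mu^*P_t=\mu^*$ lets me write $\e[f(Y_t)]-\mu^*(f)=(\mu_0P_t-\mu^*P_t)(f)$; since $f/\abs{f}_\alpha$ is $1$-Lipschitz for $d_\alpha$, duality and \eqref{markov_contraction} give $\abs{(\mu_0P_t-\mu^*P_t)(f)}\le\abs{f}_\alpha W_\alpha(\mu_0P_t,\mu^*P_t)\le\abs{f}_\alpha D^\alpha e^{-\alpha ct}W_\alpha(\mu_0,\mu^*)$, and $W_\alpha(\mu_0,\mu^*)\le\Wcal_1(\mu_0,\mu^*)^\alpha\lesssim1$ because both measures have first moment at most $M_q=1$. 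For \eqref{alpha_align02} the crucial reduction is the Markov property: conditioning on $\mathscr{F}_t$ turns $\e[f(Y_s)\,\overline{f(Y_t)}]$ into $\e[(P_{s-t}f)(Y_t)\,\overline{f(Y_t)}]$ and $\e[f(Y_s)]$ into $\e[(P_{s-t}f)(Y_t)]$, so $\Cov(f(Y_s),f(Y_t))=\Cov((P_{s-t}f)(Y_t),f(Y_t))$, a single-time covariance. Writing $h:=P_{s-t}f-\mu^*(f)$ (subtracting a constant leaves the covariance unchanged) and bounding the factor $f(Y_t)-\e[f(Y_t)]$ by $2\norm{f}_\infty$ gives $\abs{\Cov(f(Y_s),f(Y_t))}\le2\norm{f}_\infty\,\e[\abs{h(Y_t)}]$; the pointwise estimate $\abs{h(x)}=\abs{(\delta_xP_{s-t}-\mu^*)(f)}\le\abs{f}_\alpha D^\alpha e^{-\alpha c(s-t)}W_\alpha(\delta_x,\mu^*)$ then reduces matters to $\e[W_\alpha(\delta_{Y_t},\mu^*)]\le(\e\,\Wcal_1(\delta_{Y_t},\mu^*))^\alpha\le(\e\abs{Y_t}+\int\abs{y}\mu^*(\d y))^\alpha\lesssim1$.

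The estimates in \eqref{markov_contraction} and \eqref{alpha_align01} are essentially formal consequences of Assumption \ref{ContractiveMarkov} and the $d_\alpha$-duality; I expect the only real obstacle to sit in \eqref{alpha_align02}. On a non-compact space $h$ is not uniformly bounded, and its natural pointwise bound grows like $W_\alpha(\delta_x,\mu^*)\asymp(1+\abs{x})^\alpha$, so the exponential decay in $s-t$ must be carried against a factor that is only integrable through the uniform moment bound of Assumption \ref{LqMarkov}. The care therefore lies in threading that moment control (with the normalization $M_q=1$) through the expectation, and in checking throughout that the resulting constants depend only on $d,D,c$ and not on $t,q,M_q$, as required by the convention $\lesssim$.
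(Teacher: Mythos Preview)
Your argument is correct. For \eqref{markov_contraction} and \eqref{alpha_align01} it coincides with the paper's proof: Jensen on Dirac masses followed by the duality lift of Remark~\ref{recontractive}, and then $d_\alpha$-duality together with the moment bound $W_\alpha(\mu_0,\mu^*)\lesssim 1$.

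For \eqref{alpha_align02} your route genuinely differs from the paper's, and the worry you articulate in your last paragraph in fact identifies a gap in the paper's argument rather than in yours. The paper first asserts the uniform bound $\|P_\tau f-\mu^*(f)\|_\infty\lesssim|f|_\alpha e^{-\alpha c\tau}$ and then controls $|\e[P_{s-t}f(Y_t)\,\overline{f(Y_t)}]|$ by $\|P_{s-t}f\|_\infty\|f\|_\infty$. But on an unbounded state space the only available pointwise estimate is $|P_\tau f(x)-\mu^*(f)|\le |f|_\alpha D^\alpha e^{-\alpha c\tau}W_\alpha(\delta_x,\mu^*)$, and $W_\alpha(\delta_x,\mu^*)$ grows like $(1+|x|)^\alpha$; a bound independent of $x$ with the stated constant is false in general (e.g.\ the Ornstein--Uhlenbeck semigroup with $f=\arctan$). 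Your approach sidesteps this: after reducing to $\Cov(h(Y_t),f(Y_t))$ with $h=P_{s-t}f-\mu^*(f)$, you bound $\e[|h(Y_t)|]$ rather than $\|h\|_\infty$, and close via $\e[W_\alpha(\delta_{Y_t},\mu^*)]\le(\e|Y_t|+\int|y|\,\mu^*(\d y))^\alpha\lesssim 1$ from Assumption~\ref{LqMarkov}. The resulting constants depend only on $d,D,c$, as the $\lesssim$ convention demands. So your treatment of \eqref{alpha_align02} is both different from and more careful than the paper's.
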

	
		\begin{proof}
			By Jensen's inequality, for $x,\, y\in\Real^d,\, \mu_0=\delta_x,\,\nu_0=\delta_y$ we get
			$$
			\begin{aligned}
				W_{\alpha}(\mu_0 P_t,\nu_0 P_t)&\leq (\Wcal_1(\mu_0 P_t,\nu_0 P_t))^{\alpha}\\
				&\leq D^{\alpha} \exp(-\alpha ct) (\Wcal_1(\mu_0,\nu_0))^{\alpha}\\
				&=D^{\alpha} \exp(-\alpha ct) \abs{x-y}^{\alpha}\\
				&=D^{\alpha} \exp(-\alpha ct)W_{\alpha}(\mu_0,\nu_0).
			\end{aligned}$$
			 By Remark \ref{recontractive}, we finish the proof of \eqref{markov_contraction}.
			
			Therefore, for an $\alpha$-${\rm H\ddot{o}lder}$ function $f:\Real^d\to\mathbb{C}$, we have 
			$$
			\begin{aligned}
				\abs{\e[f(Y_t)]-\mu^*(f)}&\leq \abs{f}_{\alpha}W_{\alpha}(\mu_0 P_t,\mu^*)\\
				&\leq D^{\alpha}\abs{f}_{\alpha} \exp(-\alpha ct)W_{\alpha}(\mu_0,\mu^*)\\
				&\lesssim \abs{f}_{\alpha}\exp(-\alpha ct),
			\end{aligned}$$
            which yields \eqref{alpha_align01}. Here we use 
			$$W_{\alpha}(\mu_0,\mu^*)\leq \e[\abs{Y_0-Y^*}^{\alpha}]\leq \norm{Y_0-Y^*}_{L^q}^{\alpha}\leq (2M_q)^\alpha=2^{\alpha}\lesssim1,$$
			where $Y^*$ is a random variable with distribution $\mu^*$.
			
			For any $x\in\Real^d$, we let $Y_0=x$, and get 
			$$\abs{P_t f(x)-\mu^*(f)}=\abs{\e[f(Y_t)]-\mu^*(f)}\lesssim \abs{f}_{\alpha}\exp(-\alpha ct),$$ 
			i.e. 
			$$
			\norm{P_t f-\mu^*(f)}_{\infty}\lesssim \abs{f}_{\alpha}\exp(-\alpha ct).$$
			Without loss of generality, we let $\mu^*(f)=0$ (otherwise we replace $f$ with $\tilde{f}=f-\mu^*(f)$, and get ${\rm Cov}(f(Y_s), f(Y_t))={\rm Cov}(\tilde{f}(Y_s), \tilde{f}(Y_t))$, $\abs{f}_{\alpha}=\abs{\tilde{f}}_{\alpha}$, and $\norm{\tilde{f}}_{\infty}\leq 2\norm{f}_{\infty}\lesssim \norm{f}_{\infty}$). Thus we have
			$$
			\begin{aligned}
				\abs{{\rm Cov}(f(Y_s), f(Y_t))}&\leq \abs{\e\Big[f(Y_s)\overline{f(Y_t)}\Big]}+\abs{\e[f(Y_s)]\overline{\e[f(Y_t)]}}\\
				&=\abs{\e\Big[\e[f(Y_s)|Y_t]\overline{f(Y_t)}\Big]}+\abs{\e[f(Y_s)]\e[f(Y_t)]}\\
				&=\abs{\e\Big[P_{s-t}f(Y_t)\overline{f(Y_t)}\Big]}+\abs{\e[f(Y_s)]\e[f(Y_t)]}\\
				&\leq \norm{P_{s-t}f}_{\infty}\e[\abs{f(Y_t)}]+\abs{\e[f(Y_s)]}\e[\abs{f(Y_t)}]\\
				&\leq \norm{P_{s-t}f}_{\infty}\norm{f}_{\infty}+\norm{f}_{\infty}\abs{\e[f(Y_s)]}\\
				&\lesssim \norm{f}_{\infty}\abs{f}_{\alpha}\exp(-\alpha c(s-t))+\norm{f}_{\infty}\abs{f}_{\alpha}\exp(-\alpha cs)\\
				&\lesssim \norm{f}_{\infty}\abs{f}_{\alpha}\exp(-\alpha c(s-t)),
			\end{aligned}$$
			and complete the proof.
		\end{proof}

		\begin{prop}\label{propalpha_weighted}
			\begin{equation}
				\e[\abs{(\mu_t-\mu^*)(f)}^2]\lesssim \frac{(\norm{f}_{\infty}+\abs{f}_{\alpha})\abs{f}_{\alpha}}{(\alpha t)^{\varepsilon}}
			\end{equation}
			for all bounded $\alpha$-${\rm H\ddot{o}lder}$ functions $f:\Real^d\to\mathbb{C}$ and $t>0$.
		\end{prop}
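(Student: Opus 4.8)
The plan is to expand the second moment of the centred statistic and then control it through the two-point estimates of Lemma~\ref{lealpha02}. Writing $a_s:=f(Y_{ts})-\mu^*(f)$, so that $(\mu_t-\mu^*)(f)=\int_0^1 a_s\,\overline{w}_t(\d s)$, I would first square and use Fubini to obtain
\[
\e\big[\abs{(\mu_t-\mu^*)(f)}^2\big]
=\int_0^1\!\!\int_0^1\e\big[a_{s_1}\overline{a_{s_2}}\big]\,\overline{w}_t(\d s_1)\,\overline{w}_t(\d s_2),
\]
and then split each integrand into a covariance part and a mean part,
\[
\e\big[a_{s_1}\overline{a_{s_2}}\big]
={\rm Cov}\big(f(Y_{ts_1}),f(Y_{ts_2})\big)
+\big(\e[f(Y_{ts_1})]-\mu^*(f)\big)\overline{\big(\e[f(Y_{ts_2})]-\mu^*(f)\big)},
\]
using that subtracting the constant $\mu^*(f)$ does not change the covariance.

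For the covariance part, assuming without loss of generality that $s_1\geq s_2$ (the full integrand is symmetric in $s_1,s_2$), estimate \eqref{alpha_align02} gives $\abs{{\rm Cov}(f(Y_{ts_1}),f(Y_{ts_2}))}\lesssim\norm{f}_{\infty}\abs{f}_{\alpha}\exp(-\alpha c t\abs{s_1-s_2})$, while for the mean part \eqref{alpha_align01} gives $\abs{\e[f(Y_{ts})]-\mu^*(f)}\lesssim\abs{f}_{\alpha}\exp(-\alpha c ts)$, so that the product is $\lesssim\abs{f}_{\alpha}^2\exp(-\alpha c ts_1)\exp(-\alpha c ts_2)$. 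The crucial step is to trade the exponential decay for a power law tailored to the weight class: starting from $\exp(-u)\leq C_{\varepsilon}\min(1,u^{-\varepsilon})$ and separating the regimes $\abs{s_1-s_2}\geq 1/t$ and $\abs{s_1-s_2}<1/t$ (and likewise $s\geq 1/t$ versus $s<1/t$), one verifies
\[
\exp(-\alpha c t\abs{s_1-s_2})\lesssim(\alpha t)^{-\varepsilon}\big(t^{\varepsilon}\wedge\abs{s_1-s_2}^{-\varepsilon}\big),
\qquad
\exp(-\alpha c ts)\lesssim(\alpha t)^{-\varepsilon}\big(t^{\varepsilon}\wedge s^{-\varepsilon}\big),
\]
with constants uniform in $\alpha\in(0,1]$ because the factor $\alpha^{-\varepsilon}$ comes out explicitly.

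Integrating these bounds against $\overline{w}_t\otimes\overline{w}_t$ and invoking the two uniform suprema in the definition \eqref{weight} of $\mathbf{\Pi}_2(\varepsilon)$ then closes the argument: the covariance double integral is $\lesssim\norm{f}_{\infty}\abs{f}_{\alpha}(\alpha t)^{-\varepsilon}$ by the second (double) condition, and for the mean part I would use that $\overline{w}_t$ is a probability measure, so $\big(\int_0^1\exp(-\alpha c ts)\,\overline{w}_t(\d s)\big)^2\leq\int_0^1\exp(-\alpha c ts)\,\overline{w}_t(\d s)\lesssim(\alpha t)^{-\varepsilon}$ by the first condition, giving $\lesssim\abs{f}_{\alpha}^2(\alpha t)^{-\varepsilon}$. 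Summing the two contributions yields the claimed bound $\lesssim(\norm{f}_{\infty}+\abs{f}_{\alpha})\abs{f}_{\alpha}(\alpha t)^{-\varepsilon}$. I expect the main obstacle to be exactly this exponential-to-power-law conversion: the kernel has to be produced in precisely the form $t^{\varepsilon}\wedge(\cdot)^{-\varepsilon}$ that $\mathbf{\Pi}_2(\varepsilon)$ is built to integrate, with the diagonal singularity at $s_1=s_2$ absorbed by the double condition rather than generating a divergence, and one must keep the constants independent of $\alpha$ and $t$ so that the explicit $(\alpha t)^{-\varepsilon}$ survives.
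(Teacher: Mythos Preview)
Your proposal is correct and follows essentially the same approach as the paper: expand by Fubini, split into covariance and mean-product terms, apply the two estimates of Lemma~\ref{lealpha02}, and convert the exponential decay to the kernel $t^{\varepsilon}\wedge(\cdot)^{-\varepsilon}$ that $\mathbf{\Pi}_2(\varepsilon)$ is designed to integrate. The only cosmetic difference is that the paper bounds the mean-product term $\exp(-\alpha cts_1)\exp(-\alpha cts_2)\leq\exp(-\alpha ct\abs{s_1-s_2})$ and thereby absorbs everything into the double-integral condition, whereas you keep it separate and invoke the single-integral condition of $\mathbf{\Pi}_2(\varepsilon)$; both routes are equally valid and equally short.
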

		\begin{proof}
			By Fubini's Theorem, we have 
			$$
			\begin{aligned}
				\e&[\abs{(\mu_t-\mu^*)(f)}^2]=\e\Big[\abs{\int_{0}^{1}(f(Y_{ts})-\mu^*(f))\overline{w}_t(\d s)}^2\Big]\\
				&=\e\Big[\int_{0}^{1}(f(Y_{ts_1})-\mu^*(f))\overline{w}_t(\d s_1)\int_{0}^{1}\overline{(f(Y_{ts_2})-\mu^*(f))}\overline{w}_t(\d s_2)\Big]\\
				&=\e\Big[\iint_{[0,1]\times [0,1]}(f(Y_{ts_1})-\mu^*(f))\overline{(f(Y_{ts_2})-\mu^*(f))}\overline{w}_t(\d s_1) \overline{w}_t(\d s_2)\Big]\\
				&=\iint_{[0,1]\times [0,1]}\e\Big[(f(Y_{ts_1})-\mu^*(f))\overline{(f(Y_{ts_2})-\mu^*(f))}\Big]\overline{w}_t(\d s_1) \overline{w}_t(\d s_2)\\
				&=\iint_{[0,1]\times [0,1]} {\rm Cov}(f(Y_{ts_1}),f(Y_{ts_2}))\overline{w}_t(\d s_1) \overline{w}_t(\d s_2)\\
				&\ +\iint_{[0,1]\times [0,1]}(\e[f(Y_{ts_1})]-\mu^*(f))\overline{(\e[f(Y_{ts_2})]-\mu^*(f))}\overline{w}_t(\d s_1) \overline{w}_t(\d s_2).
                \end{aligned}$$
                Applying Lemma \ref{lealpha02}, we have
                $$
                \begin{aligned}
				\e&[\abs{(\mu_t-\mu^*)(f)}^2]\\
                &\leq \iint_{[0,1]\times [0,1]} \abs{{\rm Cov}(f(Y_{ts_1}),f(Y_{ts_2}))}\overline{w}_t(\d s_1) \overline{w}_t(\d s_2)\\
				&\ +\iint_{[0,1]\times [0,1]}\abs{\e[f(Y_{ts_1})]-\mu^*(f)}\abs{\e[f(Y_{ts_2})]-\mu^*(f)}\overline{w}_t(\d s_1) \overline{w}_t(\d s_2)\\
				&\lesssim (\norm{f}_{\infty}+\abs{f}_{\alpha})\abs{f}_{\alpha} \iint_{[0,1]\times [0,1]} \exp(-\alpha ct\abs{s_1-s_2})\overline{w}_t(\d s_1) \overline{w}_t(\d s_2)\\
				&= \frac{(\norm{f}_{\infty}+\abs{f}_{\alpha})\abs{f}_{\alpha}}{(\alpha t)^{\varepsilon}} \iint_{[0,1]\times [0,1]} (\alpha t)^{\varepsilon}\exp(-\alpha ct\abs{s_1-s_2})\overline{w}_t(\d s_1) \overline{w}_t(\d s_2).
			\end{aligned}$$
			We notice that 
			$$
			(\alpha t)^{\varepsilon}\exp(-\alpha ct\abs{s_1-s_2})\leq (\alpha t)^{\varepsilon}\wedge (ce\abs{s_1-s_2})^{-\varepsilon}\lesssim t^{\varepsilon}\wedge \abs{s_1-s_2}^{-\varepsilon}.$$
			Then by $\overline{w}\in\mathbf{\Pi}_2(\varepsilon)$, we get
			$$
			\e[\abs{(\mu_t-\mu^*)(f)}^2]\lesssim \frac{(\norm{f}_{\infty}+\abs{f}_{\alpha})\abs{f}_{\alpha}}{(\alpha t)^{\varepsilon}},$$
            which completes the proof.
		\end{proof}
		
		Now we divide $\Real^d$ into a compact set $K:=[-R,R]^d$ and its complementary set $K^c:=\Real^d\setminus K$, where the value of $R\in(0,\infty)$ will be optimized later. For any $f:\Real^d\to\mathbb{C}$ such that $\abs{f}_1\leq1$, we assume $f(0)=0$ by translating it, and by triangle inequality we get
		\begin{lemma}\label{tri}
			For any bounded measurable function $h: \Real^d\to\mathbb{C}$, we have 
			$$
			\abs{\mu_t(f)-\mu^*(f)}\leq 2\norm{f-h}_{L^\infty(K)}+\abs{\mu_t(h)-\mu^*(h)}+\int_{K^c}(\abs{f}+\abs{h})(\d\mu_t+\d\mu^*).$$
		\end{lemma}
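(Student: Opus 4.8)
The plan is to prove this purely by the triangle inequality together with a decomposition of $\Real^d$ into the compact box $K$ and its tail $K^c$, choosing what to keep global and what to localize so that the three resulting terms match precisely those that the subsequent Fourier argument can handle. Since $\mu_t$ and $\mu^*$ are both probability measures, the starting point is the exact identity
\[
\mu_t(f)-\mu^*(f)=\big(\mu_t(h)-\mu^*(h)\big)+\big(\mu_t(f-h)-\mu^*(f-h)\big),
\]
obtained by adding and subtracting $\mu_t(h)$ and $\mu^*(h)$. The first bracket is retained as the global quantity $\mu_t(h)-\mu^*(h)$; this is deliberate, because $h$ will eventually be taken to be a truncated Fourier series, for which Proposition~\ref{propalpha_weighted} controls exactly $\e[\abs{(\mu_t-\mu^*)(h)}^2]$ over all of $\Real^d$. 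The second bracket, carrying the approximation error $f-h$, is the one I would localize.

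Next I would split each of the integrals $\mu_t(f-h)$ and $\mu^*(f-h)$ according to $\Real^d=K\cup K^c$. On $K$ the integrand is bounded pointwise by $\norm{f-h}_{L^\infty(K)}$, so using $\mu_t(K)\le 1$ and $\mu^*(K)\le 1$ the two contributions $\abs{\int_K(f-h)\,\d\mu_t}$ and $\abs{\int_K(f-h)\,\d\mu^*}$ are each at most $\norm{f-h}_{L^\infty(K)}$, giving the term $2\norm{f-h}_{L^\infty(K)}$. On $K^c$ I would simply use $\abs{f-h}\le\abs{f}+\abs{h}$ and bound $\abs{\int_{K^c}(f-h)\,\d\mu_t}\le\int_{K^c}(\abs{f}+\abs{h})\,\d\mu_t$, and likewise for $\mu^*$; summing these two recovers $\int_{K^c}(\abs{f}+\abs{h})(\d\mu_t+\d\mu^*)$.

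Assembling the three groups via the triangle inequality then yields the claimed bound directly. There is no genuine analytic obstacle here: the content is entirely in organizing the decomposition so that (i) the globally-controlled piece is isolated as $\abs{\mu_t(h)-\mu^*(h)}$, (ii) the $L^\infty$-approximation error is measured only on the compact set where $h$ will be a good uniform approximation of $f$, and (iii) all of the unavoidable tail mass is collected into a single $K^c$-integral of $\abs{f}+\abs{h}$, which Assumption~\ref{LqMarkov} and Markov's inequality will later render small by optimizing $R$. The only point requiring mild care is to ensure that it is $f-h$ (not $f$ or $h$ individually) that gets restricted to $K$, so that the estimate reflects the approximation quality rather than $\norm{f}_{L^\infty(K)}$ alone.
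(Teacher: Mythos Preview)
Your proposal is correct and is exactly the argument the paper has in mind: the paper does not spell out a proof but simply introduces the lemma with the phrase ``by triangle inequality we get'', and your add-and-subtract decomposition followed by the $K/K^c$ split is the natural way to unpack that. No further idea is needed.
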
 
		We extend $f|_K$ to $\tilde{f}:[-2R,2R]^d\to\mathbb{C}$, which satisfies $\abs{\tilde{f}}_1\leq1$ and periodic boundary conditions on $[-2R,2R]^d$, and define $g:[-1,1]^d\to\mathbb{C}$ such that $g(x):=\tilde{f}(2Rx)$. Then $g$ satisfies periodic boundary conditions and $\abs{g}_1\leq 2R$. For a given $J\in\ZZ_{>0}$, we consider the truncation of $g$'s Fourier series $F_J(g):= \sum\limits_{\norm{k}_{\infty}\leq J} \hat{g_k}\mathbf{e}_k$, and get (by \cite[Theorem 4.4]{fourier2}) 
		$$
		\norm{F_J(g)-g}_{L^\infty([-1,1]^d)}\leq CR\frac{(\log J)^d}{J},$$
		where $C=C(d)>0$ is a constant dependent only on the dimension $d$. We then write $T_J(f)(x):=F_J(g)(x/2R)$, and get 
		$$
		\norm{T_J(f)-f}_{L^\infty([-R,R]^d)}\leq \norm{T_J(f)-\tilde{f}}_{L^\infty([-2R,2R]^d)}\lesssim R\frac{(\log J)^d}{J}.$$
  
		Next, we let $h=T_J(f)$ in Lemma \ref{tri} to estimate $\abs{(\mu_t-\mu^*)(f)}$. 
		Since $f(0)=0,\, \abs{f}_1\leq 1$, we have $\abs{f(x)}\leq \abs{x}$ for all $x\in\Real^d$. By ${\rm H\ddot{o}lder}$'s inequality, we have 
		$$
		\int_{K^c}\abs{f}\d\mu^*\leq \int_{K^c}\abs{x}\d\mu^*\leq \Big(\int_{K^c}\abs{x}^q\d\mu^*\Big)^{\frac{1}{q}}\mu^*(K^c)^{1-\frac{1}{q}}\leq \mu^*(K^c)^{1-\frac{1}{q}}.$$
		Also, by Chebyshev's inequality we have 
		\begin{equation}\label{cheby}
		    \begin{aligned}
			\mu^*(K^c)&\leq \mu^*(\abs{x}>R)\leq R^{-q}\int_{\Real^d}\abs{x}^q\d\mu^*\leq R^{-q},\\
			\p(Y_t\in K^c)&\leq \p(\abs{Y_t}>R)\leq R^{-q}\e[\abs{Y_t}^q]\leq R^{-q},
		\end{aligned}
		\end{equation}
		and thus we get $\int_{K^c}\abs{f}\d\mu^*\leq R^{1-q}$.
		By  periodicity of $T_J(f)$, we have
		$$
		\norm{T_J(f)}_{L^{\infty}(K^c)}=\norm{T_J(f)}_{L^{\infty}([-2R,2R]^d)}\lesssim R\frac{(\log J)^d}{J}+\norm{\tilde{f}}_{L^{\infty}([-2R,2R]^d)}\lesssim R\frac{(\log J)^d}{J}+R.$$  
		Therefore, 
		$$
		\begin{aligned}
			\int_{K^c}\abs{T_J(f)}\d\mu^*&\lesssim \mu^*(K^c)(R\frac{(\log J)^d}{J}+R)\leq R\frac{(\log J)^d}{J}+R^{1-q},\\
			\int_{K^c}\abs{T_J(f)}\d\mu_t&\lesssim \mu_t(K^c)(R\frac{(\log J)^d}{J}+R)\leq R\frac{(\log J)^d}{J}+R\mu_t(K^c).        
		\end{aligned}$$
		We define $\mathbf{e}_k^R(x):=\mathbf{e}_k(\frac{x}{2R})$, and get
		$$
		\begin{aligned}
			\abs{(\mu_t-\mu^*)(T_J(f))}&\leq \sum\limits_{k\in\ZZ^d,\norm{k}_{\infty}\leq J }\abs{\hat{g_k}}\abs{(\mu_t-\mu)(\mathbf{e}_k^R)}\\
			&=0+\sum\limits_{0<\norm{k}_{\infty}\leq J }\abs{\hat{g_k}}\abs{(\mu_t-\mu^*)(\mathbf{e}_k^R)}\\
			&\leq \Big(\sum\limits_{0<\norm{k}_{\infty}\leq J }\abs{\hat{g_k}}^2 \norm{k}_{\infty}^2 \Big)^{\frac{1}{2}}\Big(\sum\limits_{0<\norm{k}_{\infty}\leq J}\frac{\abs{(\mu_t-\mu^*)(\mathbf{e}_k^R)}^2}{\norm{k}_{\infty}^2}\Big)^{\frac{1}{2}}\\
			&\lesssim \norm{g}_{H^1([-1,1]^d)}\Big(\sum\limits_{0<\norm{k}_{\infty}\leq J}\frac{\abs{(\mu_t-\mu)(\mathbf{e}_k^R)}^2}{\norm{k}_{\infty}^2}\Big)^{\frac{1}{2}}\\
			&\lesssim 2R\Big(\sum\limits_{0<\norm{k}_{\infty}\leq J}\frac{\abs{(\mu_t-\mu)(\mathbf{e}_k^R)}^2}{\norm{k}_{\infty}^2}\Big)^{\frac{1}{2}},
		\end{aligned}$$ 
		where $H^1$ denotes the Sobolev space.
		From the above, we obtain the following intermediate result:
		\begin{lemma}\label{Linter}
			For any Lipschitz function $f:\Real^d\to\mathbb{C}$ such that $\abs{f}_1\leq1$, and $R>0,\,J\in\mathbb{Z}_{>0}$, we have
			\begin{equation}\label{Linter_equation}
                    \begin{aligned}
                        \abs{\mu_t(f)-\mu^*(f)}&\lesssim R\frac{(\log J)^d}{J}+R^{1-q}+\int_{K^c}\abs{x}\d\mu_t\\
            &\ +R\mu_t(K^c)+R\Big(\sum\limits_{0<\norm{k}_{\infty}\leq J}\frac{\abs{(\mu_t-\mu^*)(\mathbf{e}_k^R)}^2}{\norm{k}_{\infty}^2}\Big)^{\frac{1}{2}}.
                    \end{aligned}
			\end{equation}	
		\end{lemma}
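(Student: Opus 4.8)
The statement is a bookkeeping consequence of the ingredients already assembled in the discussion preceding it, so the plan is simply to instantiate Lemma~\ref{tri} at the specific choice $h=T_J(f)$ and then insert each of the estimates established above. Recall that after translating we may assume $f(0)=0$, whence $\abs{f(x)}\le\abs{x}$ for all $x\in\Real^d$ since $\abs{f}_1\le1$.

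First I would apply Lemma~\ref{tri} with $h=T_J(f)$, giving
$$
\abs{\mu_t(f)-\mu^*(f)}\le 2\norm{f-T_J(f)}_{L^\infty(K)}+\abs{(\mu_t-\mu^*)(T_J(f))}+\int_{K^c}\big(\abs{f}+\abs{T_J(f)}\big)\,(\d\mu_t+\d\mu^*).
$$
The first term is controlled directly by the Fourier-truncation estimate $\norm{T_J(f)-f}_{L^\infty([-R,R]^d)}\lesssim R(\log J)^d/J$ derived from \cite[Theorem 4.4]{fourier2}.

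Next I would treat the four tail contributions separately. The two involving $f$ are bounded using $\abs{f(x)}\le\abs{x}$ together with the $L^q$ moment control: $\int_{K^c}\abs{f}\,\d\mu^*\le R^{1-q}$ via H\"older's and Chebyshev's inequalities, while $\int_{K^c}\abs{f}\,\d\mu_t\le\int_{K^c}\abs{x}\,\d\mu_t$ is kept as is, being one of the advertised terms. The two involving $T_J(f)$ use the periodicity bound $\norm{T_J(f)}_{L^\infty(K^c)}\lesssim R(\log J)^d/J+R$ and $\mu^*(K^c)\le R^{-q}$, yielding $\int_{K^c}\abs{T_J(f)}\,\d\mu^*\lesssim R(\log J)^d/J+R^{1-q}$ and $\int_{K^c}\abs{T_J(f)}\,\d\mu_t\lesssim R(\log J)^d/J+R\mu_t(K^c)$.

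Finally I would bound the oscillatory term $\abs{(\mu_t-\mu^*)(T_J(f))}$ by expanding $T_J(f)$ in the rescaled Fourier basis $\mathbf{e}_k^R$, applying Cauchy--Schwarz to split off the factor $\norm{k}_\infty^{-1}$, and invoking $\norm{g}_{H^1([-1,1]^d)}\lesssim R$; this produces the sum $R\big(\sum_{0<\norm{k}_\infty\le J}\abs{(\mu_t-\mu^*)(\mathbf{e}_k^R)}^2/\norm{k}_\infty^2\big)^{1/2}$. Collecting the surviving term-types — $R(\log J)^d/J$, $R^{1-q}$, $\int_{K^c}\abs{x}\,\d\mu_t$, $R\mu_t(K^c)$, and the Fourier sum — reproduces exactly \eqref{Linter_equation}. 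No single step is a genuine obstacle here; the care required is to keep the dependence on $R$ and $J$ fully explicit, so that these can later be optimized against $t$ and $q$ in the proof of Theorem~\ref{WeightedMarkovConvergence}, and to ensure the Fourier sum is left in a form where the variance estimate of Proposition~\ref{propalpha_weighted} can subsequently control $\abs{(\mu_t-\mu^*)(\mathbf{e}_k^R)}^2$.
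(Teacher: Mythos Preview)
Your proposal is correct and follows essentially the same route as the paper: the paper also instantiates Lemma~\ref{tri} with $h=T_J(f)$, controls the approximation term by the Fourier-truncation bound, handles the four tail integrals via $\abs{f(x)}\le\abs{x}$, H\"older/Chebyshev, and the periodicity estimate for $\norm{T_J(f)}_{L^\infty(K^c)}$, and bounds $\abs{(\mu_t-\mu^*)(T_J(f))}$ by Cauchy--Schwarz together with $\norm{g}_{H^1}\lesssim R$. There is no substantive difference between your argument and the paper's.
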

		Taking supreme upper bound w.r.t $f$, and then expectation on both sides of \eqref{Linter_equation}, by Kantorovich duality and \eqref{cheby} we have 
		$$
		\begin{aligned}
			\e[\Wcal_1(\mu_t,\mu^*)]&\lesssim R\frac{(\log J)^d}{J}+R^{1-q}+\int_{0}^{1}\e[\abs{Y_{ts}}\cdot\mathbb{I}_{K^c}(Y_{ts})]\overline{w}_t(\d s)\\
			&\ +R\int_{0}^{1}\p(Y_{ts}\in K^c)\overline{w}_t(\d s)+R\e\bigg[\Big(\sum\limits_{0<\norm{k}_{\infty}\leq J}\frac{\abs{(\mu_t-\mu^*)(\mathbf{e}_k^R)}^2}{\norm{k}_{\infty}^2}\Big)^{\frac{1}{2}}\bigg]\\
			&\lesssim R\frac{(\log J)^d}{J}+R^{1-q}+\int_{0}^{1}\p(Y_{ts}\in K^c)^{1-\frac{1}{q}}\overline{w}_t(\d s)\\
                &\ +R\e\bigg[\Big(\sum\limits_{0<\norm{k}_{\infty}\leq J}\frac{\abs{(\mu_t-\mu^*)(\mathbf{e}_k^R)}^2}{\norm{k}_{\infty}^2}\Big)^{\frac{1}{2}}\bigg]\\
			&\lesssim R\frac{(\log J)^d}{J}+R^{1-q}+R\Big(\sum\limits_{0<\norm{k}_{\infty}\leq J}\frac{\e[\abs{(\mu_t-\mu^*)(\mathbf{e}_k^R)}^2]}{\norm{k}_{\infty}^2}\Big)^{\frac{1}{2}}.
		\end{aligned}$$
		Without loss of generality, we assume $R>1/2$ (in fact, we are going to let $R\to\infty$ as $t\to\infty$), so that $\abs{\mathbf{e}_k^R}_{\alpha}=(1/{2R})^{\alpha}\abs{\mathbf{e}_k}_{\alpha}\leq \abs{\mathbf{e}_k}_{\alpha}$. Applying \eqref{fourier_base} and Proposition \ref{propalpha_weighted}, we get
		$$
		\begin{aligned}
			\Big(\sum\limits_{0<\norm{k}_{\infty}\leq J}\frac{\e[\abs{(\mu_t-\mu^*)(\mathbf{e}_k^R)}^2]}{\norm{k}_{\infty}^2}\Big)^{\frac{1}{2}}&\lesssim \Big(\sum\limits_{0<\norm{k}_{\infty}\leq J}\frac{\norm{k}_{\infty}^{2\alpha-2}}{(\alpha t)^{\varepsilon}}\Big)^{\frac{1}{2}}\\
			&\lesssim \Big(\sum\limits_{l=1}^J\frac{l^{d+2\alpha-3}}{(\alpha t)^{\varepsilon}}\Big)^{\frac{1}{2}}.
		\end{aligned}$$
		Taking $\alpha=1/\log J$, we have $J^{2\alpha}=\exp(2)\lesssim 1$, and therefore
		$$
		\Big(\sum\limits_{0<\norm{k}_{\infty}\leq J}\frac{\e[\abs{(\mu_t-\mu^*)(\mathbf{e}_k^R)}^2]}{\norm{k}_{\infty}^2}\Big)^{\frac{1}{2}}\lesssim \sqrt{\frac{(\log J)^{\varepsilon}}{\hat{t}}}\Big(\sum\limits_{l=1}^J l^{d-3}\Big)^{1/2}\lesssim \sqrt{\frac{\log J}{\hat{t}}}\Big(\sum\limits_{l=1}^J l^{d-3}\Big)^{1/2},$$
		where $\hat{t}=t^{\varepsilon}$.
		
		For $d\geq3$, we have $\sum\limits_{l=1}^J l^{d-3}\leq J^{d-2}$, so that 
		$$
		\e[\Wcal_1(\mu_t,\mu^*)]\lesssim R\left(\frac{(\log J)^d}{J}+J^{\frac{d-2}{2}}\sqrt{\frac{\log J}{\hat{t}}}\right)+R^{1-q}.$$
		Taking $R=(q-1)^{\frac{1}{q}}\left(\frac{(\log J)^d}{J}+J^{\frac{d-2}{2}}\sqrt{\frac{\log J}{\hat{t}}}\right)^{-\frac{1}{q}}$, we have 
		$$
		\e[\Wcal_1(\mu_t,\mu^*)]\lesssim\left(\frac{(\log J)^d}{J}+J^{\frac{d-2}{2}}\sqrt{\frac{\log J}{\hat{t}}}\right)^{1-\frac{1}{q}}.$$
		Let $J=\floor{(\log \hat{t})^{2-\frac{1}{d}} \hat{t}^{\frac{1}{d}}}$, and for large enough $t$ we have 
		$$
		\e[\Wcal_1(\mu_t,\mu^*)]\lesssim\left(\frac{(\log\hat{t}\,)^{d-2+\frac{1}{d}}}{\hat{t}^{\frac{1}{d}}}\right)^{1-\frac{1}{q}}.$$
		
		For $d=2$, we have $\sum\limits_{j=1}^J j^{d-3}\lesssim \log J$, so that
		$$
		\e[\Wcal_1(\mu_t,\mu^*)]\lesssim R\left(\frac{(\log J)^2}{J}+\frac{\log J}{\sqrt{\hat{t}}}\right)+R^{1-q}.$$
		Taking $R=(q-1)^{\frac{1}{q}}\left(\frac{(\log J)^2}{J}+\frac{\log J}{\sqrt{\hat{t}}}\right)^{-\frac{1}{q}}$, we have 
		$$
		\e[\Wcal_1(\mu_t,\mu^*)]\lesssim\left(\frac{(\log J)^2}{J}+\frac{\log J}{\sqrt{\hat{t}}}\right)^{1-\frac{1}{q}}.$$
		Let $J=\floor{\hat{t}^{\frac{1}{2}}\log\hat{t}}$, and for large enough $t$ we have 
		$$
		\e[\Wcal_1(\mu_t,\mu^*)]\lesssim\left(\frac{\log \hat{t}}{\sqrt{\hat{t}}}\right)^{1-\frac{1}{q}}.$$
		
		Finally for $d=1$, $\sum\limits_{j=1}^J j^{d-3}$ is bounded, so that 
		$$
		\e[\Wcal_1(\mu_t,\mu^*)]\lesssim R\left(\frac{\log J}{J}+\sqrt{\frac{\log J}{\hat{t}}}\right)+R^{1-q}.$$
		Taking $R=(q-1)^{\frac{1}{q}}\left(\frac{\log J}{J}+\sqrt{\frac{\log J}{\hat{t}}}\right)^{-\frac{1}{q}}$, we have
		$$
		\e[\Wcal_1(\mu_t,\mu^*)]\lesssim\left(\frac{\log J}{J}+\sqrt{\frac{\log J}{\hat{t}}}\right)^{1-\frac{1}{q}}.$$
		Let $J=\floor{\sqrt{\hat{t}\log \hat{t}}}$, and for large enough $t$ we have 
		$$
		\e[\Wcal_1(\mu_t,\mu^*)]\lesssim\left(\sqrt{\frac{\log \hat{t}}{\hat{t}}}\right)^{1-\frac{1}{q}}.$$
		This completes the proof of Theorem \ref{WeightedMarkovConvergence}. 
		
	\section{Numerical experiments}\label{Experiments}
	We consider the following Curie--Weiss mean-field lattice model on $\Real^1$:
	\begin{equation}\label{Curie}
		\d X_t= \left[-\beta(X_t^3-X_t)+\beta K\e[X_t]\right]\d t+\d B_t,
	\end{equation}
	with $\beta=1,\, K>0$. Evidently, \eqref{Curie} is an example of \eqref{sde1} such that $b(x,\mu)=-\beta(x^3-x)+\beta K\int_{\Real^1}u\mu(\d u)$ for $x\in\Real^1,\,\mu\in\Pcal_1(\Real^1)$. Although \eqref{Curie} doesn't satisfy the monotonicity condition in \cite{du2023empirical}, it fulfills the requirements of Assumption \ref{dissi} with $\kappa(r)=\beta(\frac{r^2}{4}-1)$, and Assumption \ref{weaki} if $\frac{1}{K}>\sqrt{2\pi\beta \exp(\beta)}\Phi(\sqrt{\beta})$, where $\Phi(\cdot)$ denotes the distribution function of a standard Gauss variable (cf. \cite{main}). By symmetry, the invariant measure $\mu^*$ of \eqref{Curie} must be symmetric about the origin, and thus $\int_{\Real^1}u\mu^*(\d u)=0$. Hence, we consider the following Markovian SDE with an identical invariant measure:
	\begin{equation}\label{CurieMarkov}
		\d Y_t= -\beta(Y_t^3-Y_t)\d t+\d B_t,
	\end{equation}
	and, by solving a Fokker--Planck equation, get the density function $p^*(\cdot)$ of $\mu^*$:
	$$
	p^*(x)=\frac{1}{C}\exp\big(-2\beta(\frac{x^4}{4}-\frac{x^2}{2})\big),$$
	where $C=\int_{\Real^1}\exp\big(-2\beta(\frac{x^4}{4}-\frac{x^2}{2})\big)\dx$.      

	Then we approximate \eqref{Curie} and $\mu^*$ with the following path-dependent process:
	\begin{equation}\label{CuriePath}
		\d Z_t= \Big[-\beta(Z_t^3-Z_t)+\beta K\int_0^1Z_{ts}\d s\Big]\d t+\d B_t.
	\end{equation} 
	 We notice that the drift term is superlinear, and therefore we need tamed Euler approximations (cf. \cite{10.1214/ECP.v18-2824}) to avoid explosion by replacing $b(\cdot,\cdot)$ with 
	\begin{equation}
		b^{(n_0,\alpha)}(\cdot,\cdot):=\frac{b(\cdot,\cdot)}{1+n_0^{-\alpha}\abs{b(\cdot,\cdot)}},
	\end{equation}
	where $\alpha\in(0,\frac{1}{2}]$ and $n_0\geq1$. Here we take $n_0=10000$ and $\alpha=0.1$.
	For all $k\geq0,\,1\leq n\leq N$, we generate $\hat{Z}_{(k+1)\Delta t}$ with the following recursive formula:
	\begin{equation}
		\hat{Z}_{(k+1)\Delta t}=\frac{\hat{b}_k(\hat{Z})}{1+n_0^{-\alpha}\abs{\hat{b}_k(\hat{Z})}}\Delta t+\xi^{(n)}_k,
	\end{equation}
	where $\{\xi_k^{(n)}\}_{k\geq0}^{1\leq n\leq N}$ are independent Gauss variables with distribution $\mathcal{N}(0,\Delta t)$, and
	$$\hat{b}_k(\hat{Z})=-\beta\big((\hat{Z}_{k\Delta t})^3-\hat{Z}_{k\Delta t}\big)+\beta K\frac{1}{k+1}\sum_{j=0}^{k}\hat{Z}_{j\Delta t}.$$ 
	
	In our numerical experiment, we take the step length $\Delta t=0.1$, and set $N=1000$ independent sample paths with initial values $\hat{Z}^{(n)}_0\sim\mathcal{N}(0,1)$ for all $1\leq n\leq N$. We run the above iteration process $5\times10^4$ times, and record the expectation of Wasserstein distances $\e\big[\Wcal_1(\Ecal_t(\hat{Z}),\mu^*)\big]:=\frac{1}{N}\sum_{n=1}^N\Wcal_1(\Ecal_t(\hat{Z}^{(n)}),\mu^*)$ for $t=100k\Delta t,\ 1\leq k\leq 500$. Also, we conduct six independent experiments altogether with $K=\frac{1}{5}j$ for $j=1,2,...,6$ respectively and compare them with each other. Finally, we obtain the following double logarithmic graph, where $x$ and $y$ axes represent the steps of iteration and expectation of Wasserstein distances respectively:    
	\begin{figure}[H]
		\centering
		\includegraphics{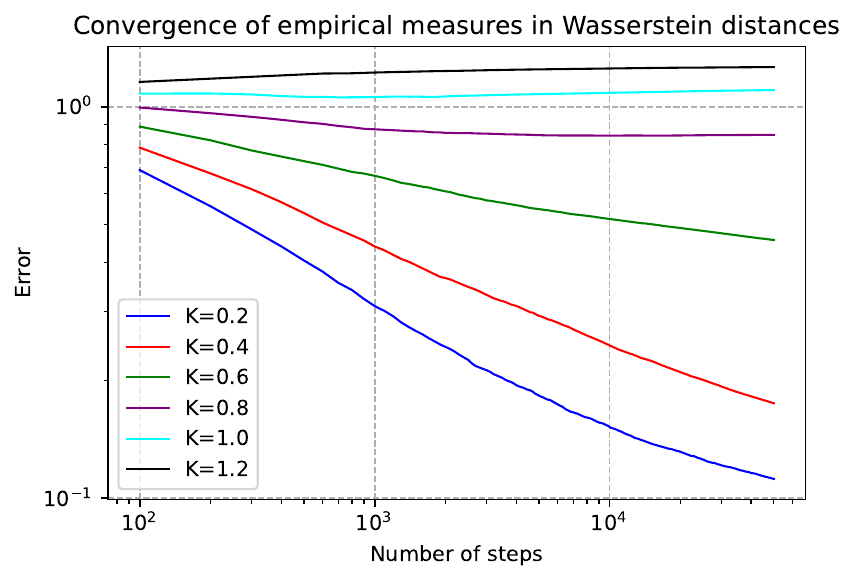}
		\caption{Expectation of Wasserstein distances between simulated empirical measures $\Ecal_t(\hat{Z})$ and invariant measure $\mu^*$ of Curie Weiss mean-field lattice model \eqref{Curie}.}
	\end{figure}
	Theoretically, we need $0<K<\big(\sqrt{2\pi \exp(1)}\Phi(1)\big)^{-1}\approx0.2876$ to satisfy Assumption \ref{weaki}. From the above graph, however, we see despite the fact that only the case $K=0.2$ satisfies Assumption \ref{weaki}, $K=0.4$ and $K=0.6$ cases also show some convergence behavior, though at lower rates. As $K$ increases, empirical measures begin to diverge from $\mu^*$ at $K=0.8$, and divergence becomes more noticeable at $K=1.0$ and $K=1.2$. These phenomena verify our theoretical results and support the intuitive hypothesis that the weaker the interactions between particles, the better the convergence performance of a McKean--Vlasov process, since it gets closer to a Markovian SDE. Furthermore, experiments go beyond our expectations, as our algorithm to approximate McKean--Vlasov dynamics still works well under conditions weaker than the theoretical assumptions, which highlights the effectiveness of the algorithm and calls for further research to improve the theory. 
\bibliographystyle{plain}
\bibliography{myref.bib}    
\end{document}